\newcommand{\Z}{\mathbb{Z}}
\newcommand{\Q}{\mathbb{Q}}
\newcommand{\C}{\mathbb{C}}
\newcommand{\N}{\mathbb{N}}
\newcommand{\FF}{\mathbb{F}}
\newcommand{\ord}{\mathfrak{o}}
\newcommand{\p}{\mathfrak p}
\newcommand{\bbeta}{\boldsymbol \beta}
\newcommand{\Sfin}{S_{\mathrm{fin}}}
\newcommand{\Sinf}{S_{\infty}}
\DeclareMathOperator{\Gal}{Gal}
\newtheorem{definition}{Definition}
\newtheorem*{theorem*}{Theorem}
\newtheorem{theorem}{Theorem}
\newtheorem{lemma}{Lemma}
\newtheorem{proposition}{Proposition}
\newtheorem{claim}{Claim}
\theoremstyle{remark}
\newtheorem{remark}{Remark}
\begin{document}
\title[On corner avoidance of $\boldsymbol \beta$-adic Halton sequences]{On corner avoidance of $\boldsymbol \beta$-adic Halton sequences}
\subjclass[2010]{11J71, 11J87, 11K38, 11A67} \keywords{corner avoidance, uniform distribution, beta-expansion, numerical integration, 
subspace theorem}

\author[M. Hofer]{Markus Hofer}
\address{M. Hofer \newline
\indent Graz University of Technology, \newline
\indent Institute of Mathematics A,\newline
\indent  Steyrergasse 30,\newline
\indent  8010 Graz, Austria.}
\email{markus.hofer\char'100tugraz.at}

\author[V. Ziegler]{Volker Ziegler}
\address{V. Ziegler\newline
\indent Johann Radon Institute for Computational and Applied Mathematics (RICAM)\newline
\indent Austrian Academy of Sciences\newline
\indent Altenbergerstr. 69\newline
\indent A-4040 Linz, Austria}
\email{volker.ziegler\char'100ricam.oeaw.ac.at}

\begin{abstract}
 We consider the corner avoiding property of $s$-dimensional $\boldsymbol \beta$-adic Halton sequences. After extending this class of point sequences in an
intuitive way, we show that the hyperbolic distance between each element of the sequence and the closest corner of $[0,1)^s$ is
$\mathcal{O}\left(\frac{1}{N^{s/2+\varepsilon}}\right)$, where $N$ denotes the index of the element. In our proof we use tools from Diophantine analysis, more
precisely, we apply Schmidt's Subspace Theorem.
\end{abstract}

\maketitle

\section{Introduction}

In this article we consider special distributional properties of deterministic point sequences $(\mathbf{x}_n)_{n > 0}$ in the $s$-dimensional unit cube. First,
we define an $s$-dimensional interval $[\mathbf{a}, \mathbf{b} ) \subseteq [0,1)^s$ as $[\mathbf{a}, \mathbf{b}) = \{ \mathbf{x} \in [0,1)^s \colon a_i \leq x_i
< b_i, i = 1, \ldots, s\}$, where $\mathbf{x} = (x_1, \ldots, x_s)$. Furthermore, we call a point sequence $(\mathbf{x}_n)_{n > 0}$ uniformly distributed modulo
1 (u.d.), if
\begin{equation*}
 \lim_{N \rightarrow \infty} \frac{1}{N}\sum_{n = 1}^N \mathbf{1}_{[\mathbf{a}, \mathbf{b})} (\mathbf{x}_n) = \lambda_s([\mathbf{a},\mathbf{b}))
\end{equation*}
for all $s$-dimensional intervals $[\mathbf{a}, \mathbf{b} ) \subseteq [0,1)^s$, where $\lambda_s$ denotes the $s$-dimensional Lebesgue measure. An equivalent
characterization is given by a famous theorem of Weyl stating that a point sequence $(\mathbf{x}_n)_{n > 0}$ is u.d.\ if and only if for every real-valued
continuous function $f$ the relation
\begin{equation*}
  \lim_{N \rightarrow \infty} \frac{1}{N} \sum_{n = 1}^N f(\mathbf{x}_n) = \int_{[0,1)^s} f(\mathbf{x}) d\mathbf{x}
\end{equation*}
holds. This result gives a hint how such sequences can be used for numerical integration, which is usually called Quasi-Monte Carlo (QMC) integration. However,
it does not reveal anything about the size of the integration error. Fortunately, the Koksma-Hlawka inequality, see \cite{hlawka}, states that
\begin{equation*}
 \left| \frac{1}{N} \sum_{n = 1}^N f(\mathbf{x}_n) - \int_{[0,1]^s} f(\mathbf{x}) d\mathbf{x} \right| \leq V(f) D_N^*(\mathbf{x}_n),
\end{equation*}
where the star-discrepancy $D_N^*$ is defined as
\begin{equation*}
 D_N^*(\mathbf{x}_n) = D_N^*(\mathbf{x}_1, \ldots, \mathbf{x}_N) = \sup_{\mathbf{a} \in [0,1)^s} \left| \frac{1}{N} \sum_{n=1}^N
\mathbf{1}_{[\mathbf{0},\mathbf{a})} (\mathbf{x}_n) - \lambda_s([\mathbf{0},\mathbf{a}))\right|
\end{equation*}
and $V(f)$ denotes the variation of $f$ in the sense of Hardy and Krause. By the fact that the best known QMC sequences, so-called low-discrepancy sequences,
have an asymptotic star-discrepancy of order $\mathcal{O}\left(\frac{\log(N)^s}{N} \right)$, we get that the error of QMC integration converges for every $s$
faster than the corresponding error of ordinary Monte Carlo integration, where random instead of deterministic point sequences are used. For a detailed
discussion of uniformly distributed sequences and discrepancy, see e.g.\ \cite{dt}.

Obviously, the Koksma-Hlawka inequality is useful only if $V(f) < \infty$, which further implies that $f$ has no singularities in $[0,1)^s$. Unfortunately,
there are many potential applications of QMC integration, where the integrand function has a singularity, for instance the pricing of Asian options in the
Black-Scholes model. For a detailed discussion we refer to \cite{owen}. However, in such cases an analogon of the Koksma-Hlawka inequality holds if the points 
of the
sequence $(\mathbf{x}_n)_{n > 0}$ stay sufficiently far away from the singularities and the integrand function satisfies mild regularity conditions. More
precisely, we have the following theorem due to Owen~\cite{owen}:

\begin{theorem*}
 Let $f(\mathbf x)$ be a real valued function on $[0,1)^s$ which satisfies
\begin{equation*}
 |\partial^u f(\mathbf{x})| \leq B \prod_{i = 1}^s \left(x^{(i)}\right)^{-A_i - \mathbf{1}_{\{i \in u\}}} \quad \text{ for all } u \subseteq \{1, \ldots, s\},
\end{equation*}
where $\mathbf{x} = (x^{(1)}, \ldots, x^{(s)})$, $A_i > 0$ and $B < \infty$. Suppose that $\left(\mathbf{x}_n\right)_{n > 0}$ satisfies
\begin{equation}\label{caprop}
\prod_{i = 1}^s x_n^{(i)} \geq cN^{-r}
\end{equation} 
for all $1\leq n \leq N$. Then for any $\eta > 0$
\begin{equation*}
 \left| \frac{1}{N} \sum_{n = 1}^N f(\mathbf{x}_n) - \int_{[0,1]^s} f(\mathbf{x}) d\mathbf{x} \right| \leq C_1 D_N^*(\mathbf{x}_n) N^{\eta + r \max_i A_i} + C_2
N^{r (\max_i A_i - 1)}
\end{equation*}
holds for constants $C_1$ and $C_2$ which may depend on $\eta$.
\end{theorem*}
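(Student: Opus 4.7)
The plan is to apply the Koksma-Hlawka inequality, but not directly to $f$ (which has infinite Hardy-Krause variation because of the singularity at the origin). Instead I would introduce a truncation $\tilde f$ that agrees with $f$ on a region $K$ containing all the sequence points and is extended to $[0,1)^s$ so as to have finite variation. Put $\delta = cN^{-r}$ and
\[
K = \bigl\{\mathbf{x}\in[0,1)^s : \textstyle\prod_{i=1}^s x^{(i)} \geq \delta\bigr\}.
\]
By the corner-avoidance hypothesis \eqref{caprop}, the points $\mathbf{x}_1,\ldots,\mathbf{x}_N$ all lie in $K$, so that $\tfrac{1}{N}\sum f(\mathbf{x}_n) = \tfrac{1}{N}\sum \tilde f(\mathbf{x}_n)$. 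Define $\tilde f$ on $[0,1)^s\setminus K$ by projecting a point radially (or along a coordinate-wise canonical map) onto $\partial K$ and evaluating $f$ there, so that $\tilde f$ is continuous with $|\partial^u \tilde f(\mathbf{x})| \leq B\prod_i (x^{(i)})^{-A_i-\mathbf{1}_{i\in u}}$ on $K$ and bounded on $K^c$ by the boundary values.

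The triangle inequality and Koksma-Hlawka then give
\[
\left|\frac{1}{N}\sum_{n=1}^N f(\mathbf{x}_n) - \int_{[0,1)^s} f \, d\mathbf{x}\right| \leq V(\tilde f)\, D_N^*(\mathbf{x}_n) + \int_{[0,1)^s\setminus K} |f(\mathbf{x})|\, d\mathbf{x} + \int_{[0,1)^s\setminus K} |\tilde f(\mathbf{x})|\, d\mathbf{x},
\]
and the task reduces to two integral estimates. For $V(\tilde f) = \sum_{\emptyset\neq u} \int |\partial^u \tilde f|\,d\mathbf{x}$, I would substitute $y_i = \log x^{(i)}$, turning the constraint $\prod x^{(i)} \geq \delta$ into $\sum y_i \geq \log\delta$, and obtain that the leading term behaves like $\delta^{-A^*}(\log\delta^{-1})^{s-1}$ where $A^* = \max_i A_i$; with $\delta = cN^{-r}$ this is $\ll N^{\eta + rA^*}$, absorbing the logarithm into $N^\eta$. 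For the tail integrals on $K^c = \{\prod x^{(i)} < \delta\}$, an analogous calculation using $|f|\leq B\prod (x^{(i)})^{-A_i}$ yields a bound of order $\delta^{1-A^*}(\log\delta^{-1})^{s-1} \ll N^{r(A^* - 1) + \eta}$, matching the second summand of the claim.

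The main obstacle is the construction of $\tilde f$: a naive choice like $\tilde f \equiv 0$ on $K^c$ introduces jumps along the boundary $\partial K$ whose contribution to the Vitali variation in each ``face'' of $[0,1)^s$ would have to be tracked separately; Owen's radial-projection device avoids this by ensuring that $\tilde f$ is Lipschitz across $\partial K$ and that its mixed partials inherit pointwise bounds of the same shape as those of $f$. Verifying that the projection map is smooth enough to preserve the factorized derivative estimates, and carrying out the change-of-variables integration for each coordinate subset $u$, is the technical heart of the argument; once this is done the remaining steps are elementary bookkeeping and the choice of $\eta$ to dominate the logarithmic factors.
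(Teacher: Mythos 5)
First, a point of reference: the paper does not prove this theorem at all --- it is quoted verbatim as a known result of Owen \cite{owen}, so there is no in-paper argument to compare yours against. Your outline does follow the strategy of Owen's own proof: replace $f$ by a finite-variation extension $\tilde f$ agreeing with $f$ on $K=\{\prod_i x^{(i)}\geq\delta\}$ with $\delta=cN^{-r}$, apply Koksma--Hlawka to $\tilde f$, and control $V(\tilde f)$ and $\int|f-\tilde f|$ by the two integral estimates you describe. Your asymptotics $\delta^{-A^*}(\log\delta^{-1})^{s-1}$ and $\delta^{1-A^*}(\log\delta^{-1})^{s-1}$ are correct and do yield the two terms of the claimed bound after absorbing logarithms into $N^{\eta}$.

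The genuine gap is exactly where you locate it, but your proposed fix is not the right one. Owen does not use a radial (or any pointwise) projection onto $\partial K$; composing $f$ with a projection destroys the product structure of the mixed partials (derivatives of $f\circ\pi$ involve derivatives of $\pi$ and do not satisfy bounds of the form $B\prod_i (x^{(i)})^{-A_i-\mathbf{1}_{\{i\in u\}}}$), and the Lipschitz kink along $\partial K$ feeds a surface-type contribution into the Vitali variations of the lower-dimensional faces that your sketch does not control. The device that actually works is Sobol's low-variation extension: since $K$ is closed under coordinatewise increase, one anchors at $\mathbf{1}$, writes $f(\mathbf{x})=\sum_{u}(-1)^{|u|}\int_{[\mathbf{x}_u,\mathbf{1}_u]}\partial^u f(\mathbf{z}_u:\mathbf{1}_{-u})\,d\mathbf{z}_u$, and defines $\tilde f$ by inserting the indicator $\mathbf{1}_K$ into each integrand. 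Up-closedness of $K$ gives $\tilde f=f$ on $K$, and the Hardy--Krause variation bound $V(\tilde f)\leq\sum_{u\neq\emptyset}\int_{K}|\partial^u f|$ is then immediate rather than something to be verified for a composition. With that construction substituted for the projection, the rest of your argument (the change of variables $y_i=\log x^{(i)}$ and the bookkeeping over subsets $u$) goes through as you describe.
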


For our purposes it is sufficient to consider only singularities in the corners of the $s$-dimensional unit cube, since if there is only a finite number of
singularities, every integrand function can be transformed appropriately. In the sequel, we say $(\mathbf{x}_n)_{n > 0}$ is (hyperbolically) avoiding the 
origin,
if \eqref{caprop} holds. Owen \cite{owen} presents similar findings in case that singularities in the origin and in the corner $\mathbf{1} = (1, \ldots, 1)$ 
have to be avoided simultaneously. These results have been extended by Hartinger et al.\ \cite{hkz} who give bounds for the integration error, when there are 
singularities in all corners of the unit cube and the underlying point sequences avoid all corners hyperbolically.

The corner avoidance property has been investigated for many low-discrepancy sequences. First results are due to Sobol \cite{sobol} and Owen \cite{owen}, who
investigate the origin avoiding property of Sobol and Halton sequences. In \cite{hkz}, Hartinger et al.\ investigate Halton, Faure and generalized Niederreiter
sequences and their hyperbolic distance to all corners of the unit hypercube, where they compute the optimal values for $r$ in \eqref{caprop}. Hartinger and
Ziegler \cite{hz} extend these results to random start Halton-sequences.

In the present article we consider the $\boldsymbol \beta$-adic Halton sequence, introduced in \cite{hit}, which is a natural extension of Halton's original
construction. More precisely, we consider numeration systems $G = (G_n)_{n \geq 0}$, with base sequence $G_n$ constructed by a linear recurrence of length $d$,
i.e.
$$
 G_{n + d} = a_0 G_{n + d -1} + \ldots + a_{d - 1} G_n,
$$
where $d \geq 1$, $G_0=1$, $G_k=a_0 G_{k-1}+\dots +a_{k-1} G_0+1$ for $k<d$ and $a_i \in \mathbb{N}_0$ for $i = 0, \ldots, d-1$. Then every positive integer $n$
can be represented as
\begin{equation*}
  n=\sum_{k=0}^{\infty}\varepsilon_k(n) G_k,
\end{equation*}
where $\varepsilon_k(n) \in \{0, \ldots, \lfloor G_{k+1}/G_k \rfloor - 1 \}$ and $\lfloor x \rfloor$ denotes the integer part of $x \in \mathbb{R}$. This 
expansion (called $G$-expansion) is uniquely determined provided that 
\begin{equation}\label{regular}
\sum_{k=0}^{K-1}\varepsilon_k(n) G_k < G_K,
\end{equation}
where the digits $\varepsilon_k(n)$ are computed by the greedy algorithm (see for instance \cite{Fraenkel}). We call a sequence of digits $(\varepsilon_0,
\varepsilon_{1}, \ldots)$ regular if it satisfies \eqref{regular} for every $K$. From now on we assume that all $G$-expansions are regular.

Let
\begin{equation}\label{alpha}
 X^{d} - a_0 X^{d - 1} - \ldots - a_{d - 1}
\end{equation}
be the characteristic polynomial of the numeration system $G=(G_n)_{n\geq 0}$. In this article we are interested only in numeration systems, where the 
characteristic 
polynomial is irreducible and the dominant root $\beta$ of the characteristic polynomial \eqref{alpha} is a Pisot number. In this case we call $\beta$, which 
plays a crucial role, the characteristic root of the numeration system $G$. Note that, by \cite[Theorem 2]{frougny}, the dominant root $\beta$ of 
\eqref{alpha} is always a Pisot number if
$$
a_0 \geq \ldots \geq a_{d-1} \geq 1.
$$
In the sequel we write $\mathbf{a} = (a_0, \ldots, a_{d-1})$.

Let $\mathcal K_G$ be the set of all sequences $(\varepsilon_j)_{j\geq 0}$ such that 
\begin{equation*}
\sum_{k=0}^{K-1}\varepsilon_k G_k < G_K,
\end{equation*}
for all $K>0$. We identify an integer $n$ with its regular expansion, i.e. the sequence $(\varepsilon_0(n),\varepsilon_1(n),\dots)\in \mathcal K_G$.
Now let us define
the so-called $\beta$-adic Monna map $\phi_\beta \colon \mathcal K_G \rightarrow \mathbb{R}$ given by
\begin{equation*}
\phi_\beta((\varepsilon_j)_{j\geq 0}) = \sum_{j \geq 0} \varepsilon_j(n) \beta^{-j-1},
\end{equation*}
where $\beta$ is the characteristic root of $G$. We call the sequence $(\phi_\beta(n))_{n > 0}$ the $\beta$-adic van der Corput sequence.
Note that if $d = 1$, then $(\phi_{\beta}(n))_{n > 0} = (\phi_{a_0}(n))_{n > 0}$ is the classical van der Corput sequence in base $a_0$.

In \cite[Lemma 1]{hit}, the authors prove that $\phi_\beta(\mathbb{N})$ is a dense subset of $[0,1)$ if and only if the coefficients of $G$ are of 
the form
\begin{align*}
 \mathbf{a} &= (a_0, \ldots, a_0)\quad  \text{ or }\\
 \mathbf{a} &= (a_0, a_0 - 1, \ldots, a_0 - 1, a_0),
\end{align*}
for $a_0 > 0$. Furthermore, they show that the $\beta$-adic Monna map transports the Haar measure on $G$ to the Lebesgue measure on the unit interval. These
results imply that there is an isometry between the odometer on the numeration system $G$ and the dynamical system defined by a transformation $T_{\beta}
\colon [0,1) \rightarrow [0,1)$, that satisfies $T_\beta\phi_\beta(n)=\phi_\beta(n+1)$. In order to define $T_\beta$
we follow \cite{hit}. Let us introduce
\begin{equation*}
 \mathcal K_G^0= \left\{x\in \mathcal K: \exists M_x, \forall j\geq M_x, \sum_{k=0}^j \varepsilon_kG_k<G_{j+1}-1 \right\}.
\end{equation*}
Let $x\in \mathcal K_G^0$ and put $x(j)=\sum_{k=0}^j \varepsilon_k G_k$ for some $j\geq M_x$. Then we define
$$\tau_\beta(x)=\varepsilon_0(x(j)+1)\dots \varepsilon_j(x(j)+1)\varepsilon_{j+1}(x)\varepsilon_{j+2}(x)\dots,$$
for all $x\in \mathcal K_G^0$. Let us note that this definition does not depend on the choice of $j\geq M_x$ and can easily be extended to
sequences $x\in\mathcal K_G\setminus \mathcal K_G^0$ by $\tau_\beta(x)=(0^\infty)$.
As in \cite{hit} we can define the so-called pseudo-inverse Monna map $\phi_\beta^{-1}$ by considering only $x\in \mathcal K_G^0$.
We define the transformation $T_{\beta}\colon [0,1) \rightarrow [0,1)$ by $T_\beta:=\phi_\beta \tau_\beta \phi_\beta^{-1}$.
Let us note that in view of the $\beta$-adic van der Corput sequnce we have $T_\beta \phi_\beta(n)=\phi_\beta(n+1)$ (see \cite{hit} for details).

Assuming certain number theoretical conditions, 
the $s$-dimensional $\boldsymbol \beta$-adic Halton sequence, with $\boldsymbol \beta = (\beta_1, \ldots, \beta_s)$, given as $(\phi_{\boldsymbol
\beta}(n))_{n > 0} = (\phi_{\beta_1}(n), \ldots, \phi_{\beta_s}(n))_{n > 0}$ is uniformly distributed in $[0,1)^s$, see~\cite{hit}. The connection between
ergodic theory and dynamical systems is drawn by Birkhoff's ergodic theorem, for a detailed discussion of this matter we refer to~\cite{ghl}.

Let us also note that the construction of the $\beta$-adic van der Corput sequences due to Ninomiya \cite{ninomiya} is closely related to our
construction. However in view of proving Corner avoidence properties our approach to $\beta$-adic Halton sequences seems to be favorable, since
the fact that a point $\phi_{\beta}(n)$ lies close to $0$ or $1$, is reflected by the $G$-expansion of the integer $n$ (see Lemma \ref{lem:extreme}). 

We call $Z(\varepsilon_0, \ldots, \varepsilon_{k-1})$ a cylinder set of length $k$, where $Z(\varepsilon_0, \ldots, \varepsilon_{k-1})$ is defined as the set of
all regular representations $(\varepsilon'_0, \varepsilon'_1, \ldots)\in \mathcal K_G$ for which $\varepsilon_i = \varepsilon'_i$ for $0 \leq i \leq k-1$. We will sometimes
write $Z$ for short, if the context is clear.

A further main ingredient for proving uniform distribution of the $\boldsymbol \beta$-adic Halton sequence is \cite[Theorem 5]{glt} which states that the
odometer on a linear recurring numeration system $G$ is uniquely ergodic 
and the corresponding invariant measure $\mu$ is given by
\begin{align}
 &\mu(Z) \label{mu} =\\ 
 &\frac{F_{K,0} \beta^{d-1} + (F_{K,1} - a_0 F_{K,0}) \beta^{d-2} + \ldots + (F_{K,d-1} - a_0 F_{K,d-2} - \ldots - a_{d-2} F_{K,0})}{\beta^K (\beta^{d-1} + 
\beta^{d-2} + \ldots + 1)},\notag
\end{align}
where $F_{K,r} := \# \{ n < G_{K+r}: n\in Z \}$ and $Z$ is a cylinder set of length $K$. We omit a detailed 
introduction of the ergodic properties of odometers on numeration systems and refer to \cite{glt}.

The remainder of this article is structured as follows: In the next section we generalize some results presented in \cite{hit} to numeration systems with
decreasing 
coefficients by extending the definition of the $\beta$-adic Monna map. In Section \ref{Sec:corner} we discuss the corner avoiding property for the extended
$\boldsymbol \beta$-adic Halton sequence. This property is proved by using the Subspace Theorem. Therefore we establish in Section \ref{Sec:aux} several 
auxiliary results that will enable us to prove this property for the $\boldsymbol \beta$-adic Halton sequence in the final section (Section \ref{Sec:Subspace}).

\section{Extended $\boldsymbol \beta$-adic Halton sequences}

In this section we slightly extend the definition of $\boldsymbol \beta$-adic Halton sequences given in~\cite{hit}. Using ergodic theory we prove that these
sequences are uniformly distributed in $[0,1)^s$. In the sequel, we denote by $\mathbf{a} = (a_0, \ldots, a_{d-1})$ the coefficients of the numeration system
$G$ and we 
assume that $a_0 \geq a_1 \geq \ldots \geq a_{d-1} \geq 1$.
\begin{definition}[Extended $\beta$-adic Monna map]
 Let $G$ be a numeration system with characteristic root $\beta$ and let the $\mathcal K_G$ and $\mathcal K_G^0$ be defined as above.
Then the extended $\beta$-adic Monna map $\psi_\beta \colon \mathcal K_G \rightarrow [0,1)$ is defined as
$$
 \psi_\beta(x) = \sum_{k = 0}^\infty f(x, k),
$$
where
\begin{equation}\label{eq:defpsi}
 f(x, k) = \sum_{i=0}^{\varepsilon_{k} - 1} \mu\left(Z_x^k(i)\right),
\end{equation}
$\mu$ is given in \eqref{mu} and $Z_x^k(i)$ is the cylinder set with parameters $(\varepsilon_0(x), \ldots, \varepsilon_{k-1}(x), i)$. As in \cite{hit}, we can
define a pseudo inverse of $\psi_{\beta}$, denoted by $\psi_{\beta}^{-1}$, by considering only $x\in \mathcal K_G^0$.
\end{definition}

\begin{remark}
Note that for a given integer $n$ the cylinders involved in sums of type~\eqref{eq:defpsi} are disjoint. 
\end{remark}

In order to illustrate the complexity of $f$, we give the following explicit example for $d = 3$, $a_0 > a_1 > a_2 \geq 1$ and $n\in \N$.
\begin{equation*}
 f(n, k) = \left \{ \begin{array}{cl} \frac{\varepsilon_k(n)}{\beta^{k+1}} & \text{ if } \scriptstyle{\varepsilon_k(n) < a_2},\\ 
	\frac{a_2}{\beta^{k+1}} + \frac{(\varepsilon_k(n) - a_2) (\beta^2 + \beta)}{\beta^{k+1} (\beta^2 + \beta + 1)}, & \text{ if }  
\scriptstyle{\varepsilon_k(n) < a_1 \vee (\varepsilon_k(n) \leq a_1  \wedge k = 1)}\\ 
	&\scriptstyle{\vee (\varepsilon_k(n) \leq a_1 \wedge \varepsilon_{k-1}(n) < a_2 \wedge k > 1)}, \\
	\frac{a_2}{\beta^{k+1}} + \frac{(a_1 + 1 - a_2) (\beta^2 + \beta)}{\beta^{k+1} (\beta^2 + \beta + 1)} + \frac{(\varepsilon_k(n) - a_1 - 1) 
\beta^2}{\beta^{k+1} (\beta^2 + \beta + 1)}, & \text{ if } \scriptstyle{(\varepsilon_k(n) > a_1  \wedge k = 1)}\\ 
	&\scriptstyle{\vee (\varepsilon_k(n) > a_1 \wedge \varepsilon_{k-1}(n) < a_2 \wedge k > 1)},\\
	\frac{a_2}{\beta^{k+1}} + \frac{(a_1 - a_2) (\beta^2 + \beta)}{\beta^{k+1} (\beta^2 + \beta + 1)} + \frac{(\varepsilon_k(n) - a_1) 
\beta^2}{\beta^{k+1} (\beta^2 + \beta + 1)}, & \text{ otherwise}. \end{array} \right.
\end{equation*}

\begin{lemma}\label{lem1}
 Let $G$ be a numeration system with characteristic root $\beta$. Then, $\psi_\beta(\mathbb{N})$ is a dense subset of $[0,1)$. Furthermore we 
have
\begin{equation*}
 \mu(Z) = \lambda(\psi_\beta(\mathcal K_Z)),
\end{equation*}
for all cylinder sets $Z$, where $\mu$ is given in \eqref{mu}, $\lambda$ is the one-dimensional Lebesgue measure and $\mathcal K_Z$ is the set of all
regular sequences $x\in Z$.
\end{lemma}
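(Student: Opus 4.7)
The extended Monna map $\psi_\beta$ is engineered so that $\psi_\beta(x)$ records the total $\mu$-mass of all regular sequences lexicographically preceding $x$; it is, in other words, the cumulative distribution function of $\mu$ under the lex order. The plan is to establish this CDF interpretation and then read off both statements of the lemma from it.

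\textbf{Step 1 (range on a cylinder).} Fix $Z = Z(\varepsilon_0,\dots,\varepsilon_{K-1})$. For $x\in \mathcal K_Z$ the summands $f(x,0),\dots,f(x,K-1)$ depend only on $\varepsilon_0,\dots,\varepsilon_{K-1}$ and contribute a common quantity $S_Z$. For $k\geq K$, the cylinders $Z_x^k(i)$ appearing in $f(x,k)$ are pairwise disjoint across both $k$ and $i$, and each is contained in $Z$. Using the finite-additivity relation
$$\mu\bigl(Z(\varepsilon_0,\dots,\varepsilon_{k-1})\bigr)=\sum_{i}\mu\bigl(Z(\varepsilon_0,\dots,\varepsilon_{k-1},i)\bigr),$$
with the sum over all admissible digits $i$ (which follows from the explicit formula~\eqref{mu} together with the recurrence for $(G_n)$), together with $\mu(Z_x^k(\varepsilon_k))=O(\beta^{-k})\to 0$, I obtain $\sum_{k\geq K}f(x,k)\leq \mu(Z)$. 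Hence $\psi_\beta(\mathcal K_Z)\subseteq [S_Z, S_Z+\mu(Z))$, which immediately gives the upper bound $\lambda(\psi_\beta(\mathcal K_Z))\leq \mu(Z)$.

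\textbf{Step 2 (surjectivity via a greedy algorithm).} Given any $y\in[0,1)$, I define digits recursively: at step $k$, choose $\varepsilon_k$ to be the largest digit admissible under the regularity constraint~\eqref{regular} so that $\sum_{j\leq k}f(x,j)\leq y$. The residual $y_k:=y-\sum_{j\leq k}f(x,j)$ then lies in $[0,\mu(Z(\varepsilon_0,\dots,\varepsilon_k)))$, which shrinks to $0$ at geometric rate because $\mu$ of a length-$k$ cylinder is $O(\beta^{-k})$ by~\eqref{mu}. Therefore $\psi_\beta(x)=y$ for the constructed $x\in\mathcal K_G$. Starting the same procedure inside $Z$ produces, for every $y\in[S_Z,S_Z+\mu(Z))$, an $x\in\mathcal K_Z$ with $\psi_\beta(x)=y$; combined with Step~1 this yields $\psi_\beta(\mathcal K_Z)\supseteq [S_Z,S_Z+\mu(Z))$ and thus $\lambda(\psi_\beta(\mathcal K_Z))=\mu(Z)$.

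\textbf{Step 3 (density on $\mathbb{N}$) and the main obstacle.} For density, observe that if $x\in\mathcal K_G$ satisfies $\psi_\beta(x)=y$ as above, its truncation $x^{(N)}$ (keeping only the first $N$ digits) corresponds to a genuine integer $n\in\mathbb{N}$, and $\psi_\beta(n)=\sum_{k<N}f(x,k)\to y$ as $N\to\infty$; hence $\psi_\beta(\mathbb{N})$ is dense in $[0,1)$. The delicate point throughout is verifying the finite-additivity identity for $\mu$ on cylinders when the digit at position $k$ ranges over all admissible values, because the admissible set may be strictly smaller than $\{0,\dots,\lfloor G_{k+1}/G_k\rfloor-1\}$ when the preceding digits already saturate the regularity bound. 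This requires a careful case analysis of the counting functions $F_{K,r}$ appearing in~\eqref{mu} and uses the ordering hypothesis $a_0\geq a_1\geq\cdots\geq a_{d-1}\geq 1$ to guarantee that the recursion closes up consistently; it is the technical heart of the proof.
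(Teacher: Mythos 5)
Your proof is correct and takes essentially the same route as the paper's (which is far terser): nested cylinders of shrinking $\mu$-measure, truncation of the digit string to integers for density, and the cumulative-distribution/interval picture for the identity $\mu(Z)=\lambda(\psi_\beta(\mathcal K_Z))$. The one point you flag as the ``technical heart'' --- additivity of $\mu$ over refinement of a cylinder by its admissible next digits --- needs no case analysis of the $F_{K,r}$: the paper takes $\mu$ from \cite[Theorem 5]{glt} as the invariant measure of the odometer, so it is a genuine measure on $\mathcal K_G$ and cylinder additivity is automatic.
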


\begin{proof}
By definition it follows that the $\mu$-measure of the union of all cylinder sets is 1 thus $0 \leq \psi_\beta(n) < 1$ for all $n \in \mathbb{N}$. We want to 
emphasize that the intersections of the cylinder sets appearing in \eqref{eq:defpsi} are of measure $0$.
Let $x \in [0,1)$, then by the definition of $\psi_\beta$, there exists a sequence $Z_n^x$ of cylinder sets with digits $(\varepsilon_0, \ldots, 
\varepsilon_{n-1})$, such
that $x \in Z_n^x$ for every $n$. Furthermore, we define an increasing sequence of integers $k_n$ as
\begin{equation*}
 k_n = \sum_{i = 0}^{n-1} \varepsilon_i G_i.
\end{equation*}
Since $\lim_{n \rightarrow \infty}\mu(Z_n^x) = 0$, we have that $\lim_{n \rightarrow \infty} \psi_{\beta}(k_n) = x$. Finally it follows by construction that 
$\psi_\beta$ is measure preserving (see also \cite[Theorem 5]{glt}) .
\end{proof}

The following result generalizes Theorems 2 and 3 in \cite{hit}.

\begin{theorem}\label{main1}
 Let $G^{(1)}, \ldots, G^{(s)}$ be numeration systems defined by
\begin{equation*}
 G^{(i)}_{n+d_i} = a_0^{(i)} G^{(i)}_{n+d_i-1} + \ldots + a^{(i)}_{d_i-1} G^{(i)}_{n}
\end{equation*}
 where $a^{(i)}_0 \geq a^{(i)}_1 \geq \ldots \geq a^{(i)}_{d_i-1} \geq 1$ holds for $i = 1, \ldots, s$.
Furthermore let $\gcd\left(a_0^{(i)}, \ldots, a^{(i)}_{d_i - 1}, a_0^{(j)}, \ldots, a^{(j)}_{d_j - 1}\right) = 1$ for all $i \neq j$ and let 
$\frac{(\beta^{(i)})^{k}}{(\beta^{(j)})^l} \notin \mathbb{Q}$, for all $l, k \in \mathbb{N}$, where $\boldsymbol \beta = 
\left(\beta^{(1)}, \ldots, \beta^{(s)}\right)$ denotes the $s$-tuple of characteristic roots of the numeration systems $G^{(1)}, \ldots, G^{(s)}$. Then, the 
extended $s$-dimensional $\boldsymbol \beta$-adic Halton sequence 
\begin{equation*}
 \left(\psi_{\boldsymbol \beta}(n)\right)_{n > 0} = \left(\psi_{\beta^{(1)}}(n), \ldots,\psi_{\beta^{(s)}}(n) \right)_{n > 0}
\end{equation*}
 is uniformly distributed in $[0,1)^s$.
\end{theorem}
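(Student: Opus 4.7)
The plan is to reduce the equidistribution statement to unique ergodicity of the product odometer and then to verify the latter using the number-theoretic hypotheses. First, I would observe that by Lemma \ref{lem1} each extended Monna map $\psi_{\beta^{(i)}}$ transports the invariant measure $\mu_i$ from \eqref{mu}, sitting on the compact space $\mathcal K_{G^{(i)}}$, to the one-dimensional Lebesgue measure; consequently the product map $\psi_{\boldsymbol \beta} = \psi_{\beta^{(1)}} \times \cdots \times \psi_{\beta^{(s)}}$ pushes the product measure $\mu = \mu_1 \otimes \cdots \otimes \mu_s$ forward to the Lebesgue measure on $[0,1)^s$. Since $\psi_{\boldsymbol \beta}(n)$ equals the image under $\psi_{\boldsymbol \beta}$ of $\tau^n$ applied to the origin, where $\tau = \tau_{\beta^{(1)}} \times \cdots \times \tau_{\beta^{(s)}}$ is the product odometer, it suffices to prove that the forward orbit of the origin under $\tau$ equidistributes with respect to $\mu$.

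Second, by \cite[Theorem 5]{glt} each individual odometer $\tau_{\beta^{(i)}}$ is uniquely ergodic with invariant measure $\mu_i$. By the classical equivalence between unique ergodicity and uniform distribution of every orbit against every continuous test function, the sought equidistribution follows as soon as the product $\tau$ is itself shown to be uniquely ergodic with respect to $\mu$. Odometers are topologically conjugate to translations on compact abelian groups, and thus carry purely discrete spectrum; for such systems the standard criterion (going back to Furstenberg's disjointness theory) reads that a product of uniquely ergodic transformations with discrete spectrum is uniquely ergodic if and only if the groups of eigenvalues of the factors intersect only at $1$.

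Third, the hypotheses of the theorem are designed precisely to verify this spectral disjointness. Up to asymptotic information controlled by the Pisot root $\beta^{(i)}$, the eigenvalues of $\tau_{\beta^{(i)}}$ arise as characters of the $G^{(i)}$-adic compactification of $\mathbb{N}$, built out of the base sequence $(G^{(i)}_k)_{k \geq 0}$ and governed by the coefficient vector $\mathbf a^{(i)}$. The coprimality hypothesis $\gcd(\mathbf a^{(i)}, \mathbf a^{(j)}) = 1$ for $i \neq j$ forbids nontrivial rational relations among these characters across different coordinates, while the irrationality hypothesis $(\beta^{(i)})^k/(\beta^{(j)})^l \notin \mathbb Q$ rules out coincidences among the remaining, Pisot-governed parts of the spectra. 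Combining both inputs gives pairwise disjointness of the eigenvalue groups, hence unique ergodicity of $\tau$; Birkhoff's ergodic theorem applied to continuous functions and transferred through $\psi_{\boldsymbol \beta}$ then yields uniform distribution of $(\psi_{\boldsymbol \beta}(n))_{n > 0}$ in $[0,1)^s$.

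The main obstacle is the explicit description of the eigenvalue groups in the new generality: when $\mathbf a^{(i)}$ is merely monotone decreasing, rather than having one of the two special shapes treated in \cite{hit}, the cylinder-set structure encoded by $f(x,k)$ in \eqref{eq:defpsi} is substantially more intricate, so one must verify by hand that the two hypotheses genuinely imply spectral disjointness. I expect this to mirror, with more careful bookkeeping, the argument for Theorem 3 of \cite{hit}, so that no essentially new phenomena appear, and the remaining steps (pushforward, Birkhoff, and the passage from the orbit of $0$ to the orbit starting at $n=1$) are routine.
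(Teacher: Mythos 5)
Your overall strategy coincides with the paper's: reduce uniform distribution to unique ergodicity of the product odometer, use the fact that each factor has purely discrete spectrum, and check that the eigenvalue groups of the factors meet only at $1$, finishing with Birkhoff's theorem transported through $\psi_{\boldsymbol\beta}$. However, two steps that you treat as routine are exactly where the theorem's content lies, and as written they are gaps. First, your claim that odometers are topologically conjugate to translations on compact abelian groups and therefore automatically carry purely discrete spectrum is not true for general linear numeration systems; this is precisely why the monotonicity hypothesis $a_0^{(i)}\geq\cdots\geq a_{d_i-1}^{(i)}\geq 1$ appears, and the paper invokes \cite[Theorem 4.1]{solomyak} to obtain pure discreteness together with the characterization $\Gamma_j=\{z\in\C:\lim_{n\to\infty}z^{G_n^{(j)}}=1\}$ of the eigenvalue group.

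Second, and more seriously, you defer the verification that the two arithmetic hypotheses actually force $\Gamma_i\cap\Gamma_j=\{1\}$, saying only that you expect it to mirror the argument of \cite{hit}. That verification is the proof. The paper carries it out by an explicit determination of $\Gamma_j$: from $G_n^{(j)}\sim b_j(\beta^{(j)})^n$ one deduces that $\exp\bigl(2\pi i\, G_n^{(j)}/(\beta^{(j)})^l\bigr)\to 1$, so every $\exp\bigl(2\pi i/(\beta^{(j)})^l\bigr)$ is an eigenvalue; and the observation that $C_j=\gcd\bigl(a_0^{(j)},\ldots,a_{d_j-1}^{(j)}\bigr)$ is the exact stable divisor of the tail of $(G_n^{(j)})$ identifies the rational (root-of-unity) part of the spectrum, giving $\Gamma_j=\bigl\{\exp\bigl(2\pi i c/(C_j^m(\beta^{(j)})^l)\bigr):m,l,c\geq 0\bigr\}$. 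Only with this explicit description do the gcd condition and the condition $(\beta^{(i)})^k/(\beta^{(j)})^l\notin\Q$ translate into trivial intersection of the spectra. Until you supply that computation (or an equivalent argument), your proof is an outline of the right strategy rather than a proof.
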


\begin{proof}
Since $a^{(i)}_0 \geq a^{(i)}_1 \geq \ldots \geq a^{(i)}_{d_i-1} \geq 1$ we know by \cite[Theorem 4.1]{solomyak} that the odometer on the base system 
$G^{(i)}$ has a purely 
discrete spectrum, which is given by
\begin{equation*}
 \Gamma_j = \{z \in \mathbb{C}: \lim_{n \rightarrow \infty} z^{G^{(j)}_n} = 1 \}.
\end{equation*}
By a standard result from ergodic theory (see e.g.\ \cite{ghl}), the Cartesian product system constructed by the odometers on the numeration 
systems $G^{(1)}, \ldots, G^{(s)}$ is ergodic if and only if the discrete parts of the spectra intersect only at $1$.

As mentioned in \cite{glt},
$$
 \lim_{n \rightarrow \infty} \frac{G^{(j)}_n}{(\beta^{(j)})^n} = b_j,
$$
where the constant $b_j$ can be computed by residue calculus. Using $\sim$ for asymptotic equality (if $n \rightarrow \infty$) we obtain 
for fixed $l \in \mathbb{N}$
\begin{align*}
 \exp \left( 2 \pi i \frac{G^{(j)}_n}{(\beta^{(j)})^l} \right) &\sim \exp \left( 2 \pi i b_j (\beta^{(j)})^{n - l} \right)\\
 &\sim \exp \left( 2 \pi i G^{(j)}_{n - l} \right),
\end{align*}
and thus
\begin{equation*}
 \lim_{n \rightarrow \infty} \exp \left( 2 \pi i \frac{G^{(j)}_n}{(\beta^{(j)})^l} \right) = \lim_{n \rightarrow \infty} \exp \left( 2 \pi i G^{(j)}_{n - l} 
\right) = 1.
\end{equation*}
Now assume that $C_j = \gcd\left(a^{(j)}_0, \ldots, a^{(j)}_{d_j - 1}\right)$, then for every $k \in \mathbb{N}$ 
there exists an integer $n_0$ with $C_j^k \mid G^{(j)}_{n}$ for all $n \geq n_0$ and there exist no integers $C', n_0' \in \mathbb{N}$ with $\gcd(C', C_j) = 1$ 
such 
that $C' \mid G^{(j)}_{n}$ 
for all $n \geq n_0'$. Thus $\Gamma_j$ can be written as
\begin{equation*}
 \Gamma_j = \left\{ \exp \left( 2 \pi i \frac{c}{C_j^m (\beta^{(j)})^l} \right) \colon m,l,c \in \mathbb{N} \cup \{0\} \right\}.
\end{equation*}
By the assumptions of the theorem the spectra only intersect at $1$. Hence the product dynamical system is ergodic. Moreover, the
unique ergodicity of the product system follows since the product dynamical system is isomorphic to a group rotation and therefore purely discrete.

Now it remains to show that $\left(\psi_{\boldsymbol \beta}(n)\right)_{n >0}$ is uniformly distributed in $[0,1)^s$. We follow the ideas given in 
\cite{hit}. Lemma~\ref{lem1} implies that
the extended $\beta$-adic Monna map $\psi_{\beta}$ transports the measure $\mu$ on the system of $G$-expansions to the Lebesgue measure on the unit interval.
Furthermore, by Lemma \ref{lem1} the Monna map $\psi_\beta$, together with its pseudo inverse, defines an isomorphism, which implies that the dynamical system 
on
$[0,1)^s$ defined by the transformation $T_{\boldsymbol \beta} \colon [0,1)^s \rightarrow [0,1)^s$, with $T_{\boldsymbol \beta}= \psi_{\boldsymbol \beta} \tau_{\boldsymbol \beta}
\psi_{\boldsymbol \beta}^{-1}$ is uniquely ergodic, where $\tau_{\boldsymbol \beta}(x)=(\tau_{\beta_1}(x),\dots,\tau_{\beta_s}(x) )$. The proof is complete by applying Birkhoff's ergodic theorem.
\end{proof}

\section{Corner avoidance}\label{Sec:corner}

In this and the following sections we keep the assumptions and notations of the previous sections. In particular, we consider an $s$-dimensional
$\bbeta$-adic Halton sequence $(\psi_{\bbeta}(n))_{n >0}$ subject to $s$ numeration systems $G^{(1)}, \dots, G^{(s)}$ with coefficients $\boldsymbol 
a^{(i)}=\left(a_0^{(i)},\dots,a_{d_i}^{(i)}\right)$ and let us assume that $a_0^{(i)}=\dots =a_{k_i}^{(i)}>a_{k_i+1}^{(i)}$. As before we denote by
$\beta^{(i)}$ the characteristic root of the numeration system $G^{(i)}$ for $1\leq i \leq s$ and for $1\leq j \leq d_i$ we denote by $\beta^{(i)}_j$ their 
conjugates. By convention we write $\beta^{(i)}_1=\beta^{(i)}$. Let $K^{(i)}$ be the Galois closure of 
$\Q\left(\beta^{(i)}\right)$, i.e. $K^{(i)}=\Q\left(\beta^{(i)}_1,\dots,\beta^{(i)}_{d_i}\right)$, and let $\Gamma^{(i)}=\Gal(K^{(i)}/\Q)$ be its Galois 
group.


Let $\mathbf h=(h^{(1)},\dots,h^{(s)})\in\{0,1\}^s$ be a corner of the unit cube $[0,1)^s$. We define the hyperbolic distance from 
$\mathbf x=(x^{(1)},\dots,x^{(s)})\in[0,1)^s$ to the corner $\mathbf h$ by 
$$ \|\mathbf x\|_{\mathbf h}=\prod_{i=0}^s \left|x^{(i)}-h^{(i)}\right|,$$
where $|\cdot|$ is the usual absolute value. For the rest of the paper we aim to prove the following theorem:

\begin{theorem}\label{th:corners}
 Assume that $K^{(i)}\cap K^{(j)}=\Q$ for all $1\leq i <j \leq s$. Then the $\boldsymbol \beta$-adic Halton sequence
 $\left(\psi_{\boldsymbol \beta}(n)\right)_{n >0}$ 
avoids corners, i.e. for any $\varepsilon>0$ there exists a constant $C_{\varepsilon,\boldsymbol \beta}$ such that
\begin{equation}\label{eq:CornerAvoid} \|\psi_{\boldsymbol \beta}(N)\|_{\mathbf h}>\frac{C_ {\varepsilon,\boldsymbol \beta}}{N^{H/2+\varepsilon}}\end{equation}
where 
$$H=\left\{ \begin{array}{ll} 2 & \;\; \text{if}\;\; \mathbf h=(0,\dots,0)\\
s&  \;\; \text{if}\;\; \mathbf h=(1,\dots,1)\\
1+\sum_{i=1}^s h^{(i)} & \;\; \text{in all other cases} \end{array} \right. .$$
\end{theorem}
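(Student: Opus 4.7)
My plan is to argue by contradiction: suppose $\|\psi_{\bbeta}(N)\|_{\mathbf h} < C_{\varepsilon,\bbeta}/N^{H/2+\varepsilon}$ holds along an infinite sequence of indices $N$, and derive a violation of Schmidt's Subspace Theorem. The first step is to apply Lemma \ref{lem:extreme} to translate smallness of $|\psi_{\beta^{(i)}}(N) - h^{(i)}|$ into structural information on the $G^{(i)}$-expansion of $N$: if $h^{(i)}=0$, the lowest $t_i$ digits must vanish, while if $h^{(i)}=1$ they must follow the unique ``maximal regular'' pattern compatible with \eqref{regular}. In either case $(\beta^{(i)})^{t_i}$ is of order at most $N$, and $N$ is determined up to its comparatively few non-extreme digits.

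Next I would express each $\psi_{\beta^{(i)}}(N) - h^{(i)}$ as an algebraic function of $\beta^{(i)}$ whose numerator is a linear form in $N$ and the non-extreme digits. Writing $G_k^{(i)} = \sum_{j=1}^{d_i} b_j^{(i)}(\beta_j^{(i)})^k$ for appropriate $b_j^{(i)} \in K^{(i)}$ and passing to Galois conjugates yields, for each $i$, a collection of $d_i$ linear forms over $K^{(i)}$; the Pisot property keeps the non-dominant conjugate contributions uniformly bounded. I would then assemble everything into a single application of the Subspace Theorem over the compositum $K = K^{(1)} \cdots K^{(s)}$. The hypothesis $K^{(i)} \cap K^{(j)} = \Q$ gives $\Gal(K/\Q) = \prod_i \Gamma^{(i)}$, which is precisely what is needed to certify linear independence of the chosen system of forms.

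Translating the Subspace Theorem bound back, using that the height of the integer vector of unknowns is polynomial in $N$, will produce the claimed exponent $H/2$. The three cases for $H$ reflect a natural dichotomy: each ``1-close'' coordinate yields an independent Diophantine bound of strength $N^{-1/2}$, while ``0-close'' coordinates yield divisibility-type constraints that coalesce in a CRT-style fashion; this explains both the value $H = s$ for the all-ones corner (no divisibility, $s$ independent Diophantine bounds) and the value $H = 2$ for the all-zeros corner (a single combined divisibility constraint dominates). The main obstacle will be this last step: constructing the integer unknowns and linear forms so that (i) their product recovers $\prod_i |\psi_{\beta^{(i)}}(N) - h^{(i)}|$ up to controllable normalizations, (ii) they are linearly independent over $K$ (the field-disjointness hypothesis being essential here), and (iii) the exceptional subspaces produced by the Subspace Theorem do not contain infinitely many of our vectors. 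Ruling out (iii) will likely require a separate argument exploiting the Pisot property and the multiplicative independence of the bases assumed in Theorem \ref{main1}.
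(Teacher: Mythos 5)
Your overall strategy does coincide with the paper's: Lemma \ref{lem:extreme} to read off the digit structure of $N$, the explicit formula $G_n=\sum_j b_j\beta_j^n$ to decompose $N$ into conjugate pieces, a single application of the Subspace Theorem over the compositum $K$, and the hypothesis $K^{(i)}\cap K^{(j)}=\Q$ to control independence. But the proposal stops exactly where the proof has to start, and two of the ingredients you would need are missing or misidentified. First, you never specify which places enter the set $S$. Over the Archimedean places alone the product of the conjugates of $x_j^{(i)}=c_j^{(i)}\bigl(\beta_j^{(i)}\bigr)^{n_i}$ is essentially $\bigl|N_{K/\Q}\bigl((\beta_1^{(i)})^{n_i}\bigr)\bigr|\geq 1$, so nothing is small; the paper must include all finite places above the ideals $\bigl(\beta_j^{(i)}\bigr)$ so that the product formula cancels these norms and only the comparison between $\prod_i|\beta_1^{(i)}|^{n_i}$ and a power of $N$ survives. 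Without the non-Archimedean part of $S$ the Subspace Theorem gives nothing.

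Second, the mechanism that produces the exponent $H/2$ is not the one you describe. In the paper each coordinate with $h^{(i)}=1$ contributes a rational constant $A_i$ (the value of the full ``maximal digit'' series), the indices are partitioned into groups $I_a$ by the value of $A_i$, and within a group one has the exact identity $N-a=\sum_j x_j^{(i)}=\sum_j x_j^{(i_a)}$, which is used to eliminate the dominant unknown $x_1^{(i)}$ for $i\neq i_a$; this elimination is what makes the relevant linear form small at the embeddings sending $\beta_j^{(i)}$ to $\beta_1^{(i)}$ while keeping the system independent. The Subspace Theorem then yields only the pairwise estimate $\prod_{i\in I_a\cup I_b}\bigl|\beta_1^{(i)}\bigr|^{n_i}\ll N^{1+\varepsilon}$ (Proposition \ref{prop:2corner}), and $H/2$ emerges by multiplying this over all pairs of groups, since $H\geq\max\{2,|\mathcal A|\}$. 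Your CRT/divisibility heuristic happens to give the right answer for the two extreme corners, but it does not produce the constants $A_i$ or the partition, without which neither the linear forms nor the exponent for mixed corners can be written down; in particular ``each $1$-close coordinate yields an independent bound $N^{-1/2}$'' is not what is proved -- the $N^{1/2}$ per coordinate only appears after averaging the pairwise bounds. Finally, for the exceptional subspaces you propose to use the Pisot property and multiplicative independence of the bases; what is actually needed there is precisely $K^{(i)}\cap K^{(j)}=\Q$, exploited through the product structure of $\Gal(K/\Q)$ by twisting the subspace equation with automorphisms that fix all but one factor (Lemma \ref{lem:finite_sol}). Multiplicative independence plays no role in that step, so the tool you earmark for closing the argument is not the one that closes it.
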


In order to prove Theorem \ref{th:corners} we use Schmidt's famous Subspace Theorem \cite{Schmidt:1971}. More precisely we use a version of the Subspace 
Theorem that goes back to Schlickewei~\cite{Schlickewei:1977}. To formulate the Theorem properly we 
have to discuss the absolute values of a number field $K/\Q$ with maximal order $\ord$. It is well known that for each  
prime ideal $\p$ in $\ord$ there is a finite field $\FF=\ord/\p$ with $p^{n_\p}$ elements, where $p>0$ is the characteristic of $\FF$. The 
number $n_\p$ is called the local degree of $\p$. We assign to each prime ideal $\p$ of $\ord$ an absolute value $|\alpha|_\p:=p^{n_\p v_\p(\alpha)}$ 
on $K$, where $v_{\p}(\alpha)$ is the unique integer such that $(\alpha)=\p^{v_\p(\alpha)}\mathfrak q$ and $\p$ and $\mathfrak q$ are coprime fractional 
ideals. We call all these 
absolute values the finite or non-Archimedean canonical absolute values.

Similarly we define infinite absolute values. Therefore we note that a number field $K/\Q$ of degree $d=[K:\Q]$ has $d$ embeddings 
$\sigma:K\hookrightarrow\C$. If the embedding $\sigma$ is real we put $n_\sigma=1$ and put $n_\sigma=2$ otherwise. For each embedding $\sigma$ we define 
an absolute value on $K$ by $|\alpha|_\sigma=|\sigma\alpha|^{n_\sigma}$ where $|\cdot|$ is the usual absolute value in $\C$. Note that with a complex embedding 
$\sigma$ also its complex conjugate $\bar\sigma$ is an embedding inducing the same absolute value. We call all these absolute values $|\cdot|_\sigma$ the 
infinite or Archimedean canonical absolute values.

We denote by $M(K)$ the set of all canonical absolute values. The product formula (see e.g. \cite[Chapter III, Theorem 1.3]{Neukirch:ANTE}) states that
\begin{equation}\label{eq:ProductFormula}
 \prod_{\nu \in M(K)} |\alpha|_\nu=1.
\end{equation}
for all algebraic numbers $\alpha\in K$. Note that the product is defined since $|\alpha|_\nu \neq 1$ for at most finitely many absolute values $\nu$.

With these notations we are able to state the absolute Subspace Theorem in its simplest form suitable for our proof (cf. \cite[Chapter V, Theorem 
1D]{Schmidt:1991}).

\begin{theorem}[Subspace Theorem]\label{th:Subspace}
Let $K$ be an algebraic number field with maximal order $\ord$ and let
$S \subset M(K)$ be a finite set of absolute
values which contains all of the Archimedean ones. For each $\nu \in S$
let $L_{\nu,1}, \cdots, L_{\nu,n}$ be $n$ linearly independent linear
forms in $n$ variables with coefficients in $K$. Then for given
$\delta>0$, the solutions of the inequality
$$
\prod_{\nu \in S} \prod_{i=1}^n |L_{\nu ,i}(\mathbf x) |_{\nu} <\overline{|\mathbf x|}^{-\delta}
$$
with $\mathbf x=(x_1,\dots,x_n) \in \ord^n$ and $\mathbf x \not = \mathbf 0$, where
\[\overline{|\mathbf x|}= \max \left\{|x_i|_\nu \: :\: 1 \leq i \leq n, \nu\in M(K), \nu \; \text{Archimedean}\right\},\]
lie in finitely many proper subspaces of $K^n$.
\end{theorem}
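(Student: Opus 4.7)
I plan to argue by contradiction: suppose that \eqref{eq:CornerAvoid} fails along an infinite sequence of indices $N$. I will attach to each such $N$ a vector $\mathbf x_N\in\ord^{s+1}$ of algebraic integers in the compositum $K:=K^{(1)}K^{(2)}\cdots K^{(s)}$ together with a system of linear forms at the archimedean places of $K$, and verify the product inequality required by Schmidt's Subspace Theorem (Theorem~\ref{th:Subspace}). The resulting conclusion that all but finitely many $\mathbf x_N$ lie in a common finite union of proper subspaces of $K^{s+1}$ will then be shown to contradict the explicit dependence of $\mathbf x_N$ on the $G^{(i)}$-digits of $N$. The linear-disjointness assumption $K^{(i)}\cap K^{(j)}=\Q$ is precisely what makes the compositum tractable, guaranteeing $[K:\Q]=\prod [K^{(i)}:\Q]$ and a clean factorisation of the archimedean places of $K$ as independent choices of embeddings of each $K^{(i)}$.

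\textbf{Algebraic setup.} Using formulas \eqref{eq:defpsi} and \eqref{mu}, I plan to verify that for $N$ with $G^{(i)}$-expansion of length $K_i$ (so $(\beta^{(i)})^{K_i}\asymp N$) one has
$$\psi_{\beta^{(i)}}(N)=\frac{\eta_i(N)}{(\beta^{(i)})^{K_i}\Lambda^{(i)}(\beta^{(i)})},\qquad \Lambda^{(i)}(X):=X^{d_i-1}+\cdots+1,$$
with $\eta_i(N)$ an algebraic integer in the ring of integers $\ord^{(i)}$ of $K^{(i)}$. Setting
$$\gamma_i(N):=\begin{cases}\eta_i(N), & h^{(i)}=0,\\ (\beta^{(i)})^{K_i}\Lambda^{(i)}(\beta^{(i)})-\eta_i(N), & h^{(i)}=1,\end{cases}$$
we obtain $\gamma_i(N)\in\ord^{(i)}$, and in the principal (Pisot) embedding $\sigma_1^{(i)}$ of $K^{(i)}$ the bound $|\sigma_1^{(i)}(\gamma_i(N))|\asymp \delta_i\cdot N$ holds, with $\delta_i:=|\psi_{\beta^{(i)}}(N)-h^{(i)}|$. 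For every other embedding $\sigma$ of $K^{(i)}$, the Pisot bound $|\sigma(\beta^{(i)})|<1$ combined with a geometric-series estimate on the conjugate of the partial sum defining $\eta_i(N)$ will give the crucial uniform bound $|\sigma(\gamma_i(N))|\leq C_i$, independent of $N$. This uniform control of the non-principal conjugates is the essential quantitative input that the Pisot hypothesis contributes.

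\textbf{Applying the Subspace Theorem.} Next I form $\mathbf x_N=(\gamma_1(N),\ldots,\gamma_s(N),1)\in\ord^{s+1}$, where $\ord$ is the ring of integers of $K$, and note $\overline{|\mathbf x_N|}\ll N$. With $S=\Sinf$ the archimedean places of $K$, each $\nu\in S$ is given by a tuple $(\sigma_{j_1},\ldots,\sigma_{j_s})$ of embeddings, one per factor; I call $\nu$ principal for factor $i$ if $\sigma_{j_i}=\sigma_1^{(i)}$. At every $\nu$ I take the $s+1$ linear forms to be the coordinate projections, adjusted if needed by harmless unimodular twists at the non-principal places in order to remain linearly independent. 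Then $|L_{\nu,i}(\mathbf x_N)|_\nu$ is of order $\delta_i\cdot N$ when $\nu$ is Pisot-principal for factor $i$, and $O(1)$ otherwise. Multiplying over $\nu\in S$ and all $s+1$ forms one expresses
$$\prod_{\nu\in S}\prod_{k=1}^{s+1}|L_{\nu,k}(\mathbf x_N)|_\nu$$
as a constant times $\prod_{i}\delta_i^{a_i}\cdot N^{b}$ with explicit integer exponents $a_i,b$ depending only on $\mathbf h$. A careful matching of these exponents is designed so that the hypothesised $\prod_i\delta_i<N^{-H/2-\varepsilon}$ translates into the inequality $\prod_\nu\prod_k |L_{\nu,k}(\mathbf x_N)|_\nu<\overline{|\mathbf x_N|}^{-\delta}$ for some $\delta=\delta(\varepsilon)>0$. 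Theorem~\ref{th:Subspace} then confines all but finitely many $\mathbf x_N$ to a finite union of proper subspaces of $K^{s+1}$.

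\textbf{Finishing argument and the main obstacle.} Each such proper subspace $V$ yields a nontrivial $K$-linear relation $c_{s+1}+\sum_{i=1}^s c_i\gamma_i(N)=0$ which must hold for infinitely many $N$. Substituting the explicit formula for $\gamma_i(N)$ in terms of $(\varepsilon_k^{(i)}(N))_k$ and separating by powers of the $\beta^{(i)}$ (again using linear disjointness), this forces the digits of $N$ in the various numeration systems to satisfy a deterministic combinatorial constraint; a counting argument combined with Theorem~\ref{main1} will show that only finitely many $N$ can meet it, yielding the desired contradiction. The main obstacles I foresee are (a) the explicit identification of $\eta_i(N)$ as an element of $\ord^{(i)}$, which requires unwinding the piecewise formula for $f(N,k)$ from the previous section; and, more importantly, (b) the delicate exponent bookkeeping needed to match the Subspace-Theorem output to each of the three regimes in the definition of $H$ (origin $\mathbf h=\mathbf 0$, the all-ones corner, and mixed corners), since the way principal and non-principal embeddings at each archimedean place contribute is sensitive to the pattern of zeros and ones in $\mathbf h$.
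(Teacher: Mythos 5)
There is a fundamental mismatch here: the statement you were asked to prove is Theorem~\ref{th:Subspace} itself, i.e.\ the number-field (Schlickewei) version of Schmidt's Subspace Theorem, but your proposal never proves it. Instead you invoke Theorem~\ref{th:Subspace} as a black box and sketch a proof of the corner-avoidance result, Theorem~\ref{th:corners}; with respect to the statement in question this is circular, since the quantity you are asked to control (solutions of $\prod_{\nu\in S}\prod_{i}|L_{\nu,i}(\mathbf x)|_\nu<\overline{|\mathbf x|}^{-\delta}$ lying in finitely many proper subspaces) is exactly what you assume. Note also that the paper does not prove this theorem either: it is quoted from the literature (\cite{Schmidt:1971}, \cite{Schlickewei:1977}, stated as in \cite[Chapter~V, Theorem~1D]{Schmidt:1991}). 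A genuine proof would require the machinery of Schmidt's original work and its $p$-adic/number-field extensions --- geometry of numbers and successive minima on parallelepipeds, Roth-type non-vanishing lemmas, Davenport's lemma, and the descent over approximation classes --- none of which appears in your sketch, and none of which can be replaced by the ergodic or digit-expansion arguments you outline.

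Even read charitably as an attempt at Theorem~\ref{th:corners}, your route differs from the paper's and leaves the hard steps open: the paper applies the Subspace Theorem to the full vector of conjugate components $x_j^{(i)}$ coming from the identities $N-A_i=\sum_{j=1}^{d_i}x_j^{(i)}$ in \eqref{eq:corner_y}, uses both the archimedean places and the finite places above the ideals $\left(\beta_j^{(i)}\right)$, and, crucially, disposes of the exceptional subspaces via Lemma~\ref{lem:finite_sol}, which is where the linear disjointness $K^{(i)}\cap K^{(j)}=\Q$ is actually used; your ``counting argument combined with Theorem~\ref{main1}'' for the subspace case is not a workable substitute, since unique ergodicity gives no information about the finitely many exceptional $N$ produced by the Subspace Theorem. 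But the decisive defect remains that the proposal proves (at best) a different statement than the one assigned.
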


\begin{remark}
 We want to note that this version of the Subspace Theorem is not state of the art. More recent versions of the Subspace Theorem due to 
Evertse and Schlickewei \cite{Evertse:2002a} or Evertse and Ferretti \cite{Evertse:2013} exist. These results have the extra feature that they yield 
a bound for the number of subspaces. However, it is not possible, even in principle, to determine these finitely many subspaces. Therefore an effective version 
of Theorem \ref{th:corners} cannot be achieved with the methods of this paper. Since 
an application of such a more recent version of Schmidt's Subspace Theorem is technical and yields no extra gain in view of Theorem~\ref{th:corners} we use the 
cited, older and simpler version.
\end{remark}

\section{Auxiliary Results}\label{Sec:aux}

This section has two aims. First, we want to relate the corner avoidance property to a Diophantine inequality to which we can apply the Subspace Theorem 
(Theorem~\ref{th:Subspace}) and second, we provide several estimates and relations which will be used in the next section.

A first step is to recognize for which $n$ the extended Monna map $\psi_{\beta}$ takes its extremal values. The minimal values are easy to describe but the 
maximal values depend on $\mathbf a$ and in particular on the index $k$ such that $a_0=a_1=\dots=a_k>a_{k+1}$. 

\begin{lemma}\label{lem:extreme}
If
\[\psi_{\beta}(N)\leq \frac{\beta^{d-1}}{\beta^n (\beta^{d-1} + \beta^{d-2} + \ldots + 1)},\]
then $N$ has a regular $G$-expansion of the form 
$$N=\varepsilon_n(N) G_n+ \dots ,$$
i.e. $\varepsilon_i(N)=0$ for all $i=0,1\ldots , n-1$. If
\[1-\psi_{\beta}(N)\leq  \frac{\beta^{d-1}}{\beta^n (\beta^{d-1} + \beta^{d-2} + \ldots + 1)},\]
then $N$ has a regular $G$-expansion $N=\sum_{i} \varepsilon_i(N) G_i$, where
the digits $\varepsilon_i(N)$ with $i<n$ satisfy
$\varepsilon_i(N)=a_0$ if $k+1\nmid i$ or $i=0$ and $\varepsilon_i(N)=a_0-1$ otherwise. 
\end{lemma}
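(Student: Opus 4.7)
My plan is to prove both claims by contraposition, using that via Lemma~\ref{lem1} the map $\psi_\beta$ sends cylinders to half-open intervals in $[0,1)$ of length equal to their $\mu$-measure, and that it respects the lexicographic ordering of $G$-expansions. For the first claim, suppose some $\varepsilon_{i_0}(N)\geq 1$ with $i_0$ minimal and $i_0<n$, so that $\varepsilon_i(N)=0$ for $i<i_0$. Since the terms $f(N,k)$ are non-negative, one has $\psi_\beta(N)\geq f(N,i_0)\geq \mu(Z_N^{i_0}(0))$, where $Z_N^{i_0}(0)$ is the all-zero cylinder of length $i_0+1$. The key computation is that for the all-zero cylinder of length $K$, a shift bijection shows $F_{K,r}=G_r$; together with the identity $G_k=a_0G_{k-1}+\ldots+a_{k-1}G_0+1$ (valid for $k<d$) and the long recurrence for $k\geq d$, the numerator of \eqref{mu} collapses to $\beta^{d-1}+\ldots+1$, giving $\mu(Z_N^{i_0}(0))=\beta^{-(i_0+1)}$. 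Since $i_0+1\leq n$ and $\beta^{d-1}<\beta^{d-1}+\ldots+1$,
\[
\psi_\beta(N)\geq \beta^{-(i_0+1)} \geq \beta^{-n} > \frac{\beta^{d-1}}{\beta^n(\beta^{d-1}+\ldots+1)},
\]
contradicting the hypothesis.

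For the second claim, write $\varepsilon_i^{\max}$ for the digit prescribed in the statement and $Z^{\max}_K$ for the cylinder whose first $K$ digits are $(\varepsilon_0^{\max},\ldots,\varepsilon_{K-1}^{\max})$. A direct regularity check using the recurrences for $G_k$ and the structure $a_0=\ldots=a_k>a_{k+1}$ shows that $\varepsilon_i^{\max}$ is the largest digit admissible at position $i$ given maximal preceding digits, so $Z^{\max}_K$ is the rightmost length-$K$ cylinder under the $\psi_\beta$ ordering. If $N$ fails the maximality pattern at some position less than $n$, take $i_0$ minimal, so $\varepsilon_{i_0}(N)<\varepsilon_{i_0}^{\max}$. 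The $(i_0+1)$-th digit splits $Z^{\max}_{i_0}$ into ordered subcylinders, and $\psi_\beta(N)$ lies in one strictly to the left of $Z^{\max}_{i_0+1}$; exploiting the half-open nature of the intervals,
\[
1-\psi_\beta(N) > \mu\bigl(Z^{\max}_{i_0+1}\bigr) \geq \mu\bigl(Z^{\max}_n\bigr),
\]
the last inequality being the nesting $Z^{\max}_n\subseteq Z^{\max}_{i_0+1}$ (since $i_0+1\leq n$).

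It remains to establish $\mu(Z^{\max}_n)\geq \beta^{d-1}/(\beta^n(\beta^{d-1}+\ldots+1))$. Applying \eqref{mu} directly, one observes $F_{n,0}=1$ since $S_n:=\sum_{i<n}\varepsilon_i^{\max}G_i$ is the unique integer of $Z^{\max}_n$ strictly below $G_n$, which already contributes the target leading term $\beta^{d-1}$ to the numerator. The claim therefore reduces to non-negativity of the remaining coefficients
\[
F_{n,r}-a_0F_{n,r-1}-\ldots-a_{r-1}F_{n,0}\quad\text{for } r=1,\ldots,d-1.
\]
I would prove this by a combinatorial injection mirroring the recurrence for $G$ restricted to the max cylinder: decomposing an element of $Z^{\max}_n\cap[0,G_{n+r})$ according to its digit at position $n+r-1$ should display $a_0F_{n,r-1}+\ldots+a_{r-1}F_{n,0}$ as an injective count inside $F_{n,r}$.

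The main obstacle is this last non-negativity inequality. Unlike the all-zero case, the counts $F_{n,r}$ for the max cylinder do not reduce to $G_r$ in general (small examples with $\mathbf{a}=(3,2,1)$ and $\mathbf{a}=(2,2,1)$ already yield different values of $F_{n,r}$), because the max-prefix regularity constraints restrict the admissible tails in a way depending on where the $a_i$ first fall below $a_0$. Controlling this interaction, so that enough regular extensions persist to absorb the contributions weighted by $a_0,\ldots,a_{r-1}$, is the technical heart of the argument and will require an explicit injection consistent with both the $G$-recurrence and the residual regularity imposed by the max prefix.
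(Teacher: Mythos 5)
Your route is essentially the paper's: identify the lexicographically maximal regular word of length $n$ (your maximality check is the paper's Claim~\ref{claim:max}), and reduce both statements to a lower bound on the $\mu$-measure of cylinder sets. The first statement and the reduction $1-\psi_\beta(N)>\mu\bigl(Z^{\max}_n\bigr)$ are fine. But the step you yourself flag as open, the non-negativity of $F_{n,r}-a_0F_{n,r-1}-\cdots-a_{r-1}F_{n,0}$, is exactly the load-bearing estimate, and without it the bound $\mu\bigl(Z^{\max}_n\bigr)\geq \beta^{d-1}/\bigl(\beta^n(\beta^{d-1}+\cdots+1)\bigr)$ is not established. As written the proof is therefore incomplete. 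The paper closes this point by citing the proof of Lemma~3 in \cite{glt}, where it is shown that for an arbitrary cylinder $Z$ of length $K$ the counts $F_{K,r}$ satisfy the same recurrence as $G_n$ and the partial differences $F_{K,j}-a_0F_{K,j-1}-\cdots-a_{j-1}F_{K,0}$ are non-negative for $0\leq j\leq d$; together with $F_{K,0}\geq 1$ this gives $\mu(Z)\geq \beta^{d-1}/\bigl(\beta^{K}(\beta^{d-1}+\cdots+1)\bigr)$ for \emph{every} cylinder of length $K$, which is all the lemma needs (this uniform bound is also what the paper uses in the first half, in place of your explicit evaluation $\mu=\beta^{-K}$ for the all-zero cylinder).

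The obstacle you describe is, moreover, not actually there: the injection should be run top-down, not bottom-up. Decompose $Z\cap[0,G_{K+r})$ by the digit $e=\varepsilon_{K+r-1}(n)$; for each $e\in\{0,\dots,a_0-1\}$ one has $G_{K+r}-eG_{K+r-1}\geq G_{K+r-1}$, and the translation $n\mapsto n-eG_{K+r-1}$ is a bijection onto $Z\cap[0,G_{K+r-1})$ because it leaves the digits at positions $<K+r-1$ (in particular the prefix defining $Z$) untouched; iterating on the residual range $[0,\,a_1G_{K+r-2}+\cdots)$ yields $F_{K,r}\geq a_0F_{K,r-1}+\cdots+a_{r-1}F_{K,0}$ for any cylinder whatsoever. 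So the interaction between the maximal prefix and the admissible tails that you worry about never enters: you do not need $F_{n,r}=G_r$, only $F_{n,0}\geq 1$ (which supplies the leading term $\beta^{d-1}$ in the numerator of \eqref{mu}) plus this one-sided inequality. Either carry out that uniform injection or cite \cite{glt}; until then the second half of your argument has a genuine gap.
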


\begin{proof}
As mentioned in \cite[Proof of Lemma 3]{glt}, for a cylinder set $Z$ with $K$ digits, the functions $F_{K,r}$ given in \eqref{mu} follow the same recurrence 
relation as the base
sequence $G_n$. Thus
\begin{equation*}
 (F_{K,j} - a_0 F_{K,j-1} - \ldots - a_{j-1} F_{K,0}) \geq 0, \quad \text{ for } 0 \leq j \leq d,
\end{equation*}
and hence 
\[\mu(Z_j)\geq \frac{\beta^{d-1}}{\beta^j (\beta^{d-1} + \beta^{d-2} + \ldots + 1)},\]
where $Z_j$ is a cylinder set with at most $j$ digits.

Let us assume that
\[\psi_{\beta}(N)\leq \frac{\beta^{d-1}}{\beta^n (\beta^{d-1} + \beta^{d-2} + \ldots + 1)},\]
but $N$ has regular $G$ expansion
$$N=\varepsilon_k(N) G_k+ \dots ,$$
where $\varepsilon_k(N)\neq 0$ and $k<n$. Then we have
$$\psi_{\beta}(N)\geq \mu(Z_k)\geq  \frac{\beta^{d-1}}{\beta^{k} (\beta^{d-1} + \beta^{d-2} + \ldots + 1)}>
\frac{\beta^{d-1}}{\beta^n (\beta^{d-1} + \beta^{d-2} + \ldots + 1)},$$
where $Z_k=Z(\stackrel{k\,\text{times}}{\overbrace{0,\dots,0}})$. Hence the first statement of the lemma is proved.

In order to prove the second statement we claim the following

\begin{claim}\label{claim:max}
 Let $N<G_{n}$ be an integer with regular $G$-expansion
 $$N=\varepsilon_0(N) G_0+ \dots + \varepsilon_{n-1}(N)G_{n-1}$$
 which yields the word $w_N=\varepsilon_0(N)\dots \varepsilon_{n-1}(N)$. The word $w_N$ is maximal with respect to lexicographic order if and only 
 if $\varepsilon_i(N)=a_0$ for all indices $i$ such that $k+1\nmid i$ or $i=0$ and $\varepsilon_i(N)=a_0-1$ for all other indices.
\end{claim}

\begin{proof}[Proof of the claim]
First, let us note that the digit expansion given in the claim is a regular $G$-expansion. We prove this by induction on $n$. Indeed for $n\leq k+1$ we have
$$G_n=a_0+a_0G_1+\dots+a_0G_{n-1}+1>a_0+a_0G_1+\dots+a_0G_{n-1}$$
and inequality \eqref{regular} is satisfied. Now assume that all those expansions are regular for all $m<n$. We have to prove that also the expansion for $n$ 
is regular, i.e. we have to show that inequality \eqref{regular} holds for $n$, too. Therefore assume that $n=n'(k+1)+r$ with $r<k+1$. We obtain
\begin{multline*}
 a_0+\dots+a_0 G_{n'(k+1)-1}+(a_0-1)G_{n'(k+1)}+a_0G_{n'(k+1)+1}+\dots+a_0G_n\\
 <a_0G_{n'(k+1)}+a_0G_{n'(k+1)+1}+\dots+a_0G_n\leq G_{n+1},
\end{multline*} 
where the first inequality holds by the induction hypothesis with $m=n'(k+1)-1$ and the second inequality holds because of the recursion defining the sequence 
$G_n$.

Since we assume that the $G$-expansion is regular no digit can be larger than $a_0$. Therefore our construction for $n\leq k$ is optimal. 
Since
$$a_0+\dots+a_0G_{k+1}>G_{k+2}$$
we deduce that $\varepsilon_{k+1}(N)<a_0$. In general we have
\begin{multline*}
G_{n'(k+1)+1}=a_0 G_{n'(k+1)}+\dots+a_0G_{(n'-1)(k+1)+1}+a_{k+1}G_{(n'-1)(k+1)}+\dots \\
<a_0 G_{n'(k+1)}+\dots+a_0G_{(n'-1)(k+1)+1}+a_0G_{(n'-1)(k+1)},
\end{multline*}
hence all digits $\varepsilon_{(k+1)n'}(N)$ are less than $a_0$. Hence our construction is optimal.
\end{proof}

We continue the proof of Lemma \ref{lem:extreme}. Since $1=\sum \mu(Z)$, where the sum runs over all cylinder sets of length $n$, we deduce that
\begin{equation}\label{eq:max_eq}
1-\psi_{\beta}(N)\leq \frac{\beta^{d-1}}{\beta^n (\beta^{d-1} + \beta^{d-2} + \ldots + 1)}\leq \mu(Z)
\end{equation}
only if $Z$ is the cylinder set of length $n$ such that $\psi_{\beta}(Z)$ lies closest to the right edge. But, by the definition of $\psi_{\beta}$ this is 
exactly the cylinder set $Z$ of the form $Z(\varepsilon_0(N),\dots, \varepsilon_{n-1}(N))$ such that the corresponding word $w_N=\varepsilon_0(N)\dots 
\varepsilon_{n-1}(N)$ 
is maximal with respect to the lexicographic order. Therefore $N$ satisfies inequality~\eqref{eq:max_eq} provided its regular $G$-expansion lies in $Z$ and by 
Claim~\ref{claim:max} the proof is complete.
\end{proof}

Let $G$ be a numeration system as above and $\beta$ its characteristic root and $\beta=\beta_1,\beta_2,\dots,\beta_d$ the conjugates of $\beta$. Then we have
\begin{equation}\label{eq:Rec_explicit}
G_n=\sum_{j=1}^{d} b_j \beta_j^n,
\end{equation}
where $b_j\in K=\Q(\beta_1,\dots,\beta_d)$. Note that by our assumptions the characteristic polynomial is 
irreducible and has therefore no double zero.

\begin{lemma}\label{lem:Cramer}
We have $b_j\in\Q(\beta_j)$ and they are conjugates. In particular, if $\sigma\in\Gamma=\Gal(K/\Q)$ satisfies
$\sigma(\beta_j)=\beta_k$ for some $1\leq j,k \leq d$, then $\sigma(b_j)=b_k$. Moreover, there exists a rational
integer $D_0$ such that all $D_0b_j$ are algebraic integers. In fact $D_0$ can be chosen to be the discriminant of the order 
$\Z[\beta_1,\dots,\beta_{d}]$.
\end{lemma}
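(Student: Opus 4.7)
The plan is to solve for the $b_j$ by Cramer's rule applied to the Vandermonde system coming from the first $d$ terms of the recurrence, and then to read off both the Galois equivariance and the integrality statement directly from that explicit formula.

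First, since the characteristic polynomial \eqref{alpha} is irreducible over $\Q$, the conjugates $\beta_1,\dots,\beta_d$ are pairwise distinct, so the Vandermonde matrix $V=(\beta_j^{\,n})_{0\le n\le d-1,\,1\le j\le d}$ has nonzero determinant $\Delta:=\det V=\prod_{i<j}(\beta_j-\beta_i)$. Substituting $n=0,1,\dots,d-1$ into \eqref{eq:Rec_explicit} turns the defining conditions on the $b_j$ into the linear system $V(b_1,\dots,b_d)^{t}=(G_0,\dots,G_{d-1})^{t}$, whose right-hand side lies in $\Z^d$, and Cramer's rule gives $b_j=\det V_j/\Delta$, where $V_j$ is obtained from $V$ by replacing the $j$-th column by the vector of initial values.

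Next I would verify Galois equivariance. Any $\sigma\in\Gamma$ acts by some permutation $\pi$ of the roots, $\sigma(\beta_j)=\beta_{\pi(j)}$. Applying $\sigma$ to \eqref{eq:Rec_explicit} for $n=0,\dots,d-1$ and using $\sigma(G_n)=G_n$ produces the alternative expansion $G_n=\sum_j \sigma(b_j)\beta_{\pi(j)}^{\,n}$; uniqueness of the solution to the Vandermonde system then forces $\sigma(b_j)=b_{\pi(j)}$, which is precisely the conjugacy statement of the lemma. Specializing $\sigma$ to the stabilizer $\Gal(K/\Q(\beta_j))$ so that $\pi(j)=j$ shows $b_j$ is fixed by this subgroup, hence $b_j\in\Q(\beta_j)$.

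Finally, for the integrality of $D_0 b_j$, every entry of $V_j$ lies in $\Z[\beta_1,\dots,\beta_d]$, so $\det V_j$ is an algebraic integer. Multiplying the Cramer identity $\Delta\cdot b_j=\det V_j$ by $\Delta$ yields $\Delta^2 b_j=\Delta\cdot \det V_j$, again an algebraic integer. The quantity $\Delta^2=\prod_{i<j}(\beta_i-\beta_j)^2$ is symmetric in the roots, hence expressible as a polynomial in $a_0,\dots,a_{d-1}$ with integer coefficients, so it is a rational integer; moreover it coincides with the discriminant of the characteristic polynomial \eqref{alpha}, i.e.\ with the discriminant of the order $\Z[\beta_1,\dots,\beta_d]$. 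Taking $D_0=\Delta^2$ therefore yields all the required properties. No serious obstacle is anticipated: the lemma is essentially the standard assertion that the Binet-type coefficients of a linear recurrence with rational-integer initial data depend Galois-equivariantly on the characteristic roots and become integral after scaling by the discriminant. The only point that needs to be stated cleanly is that the same uniqueness of the Vandermonde solution drives both the Cramer formula and the equivariance comparison.
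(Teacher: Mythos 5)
Your proof is correct and self-contained, but it takes a somewhat different route from the paper. The paper does not solve the Vandermonde system from scratch: it simply quotes the closed form $b_j=\frac{\beta_j^d-1}{\beta_j-1}\cdot\frac{1}{P'(\beta_j)}$ from Steiner's work, from which $b_j\in\Q(\beta_j)$ and the Galois equivariance are immediate, and it obtains the integrality of $D_0b_j$ from the standard fact that $P'(\beta)$ generates (a multiple of) the different, whose norm is the discriminant. Your argument replaces both citations by the Cramer/Vandermonde computation: equivariance comes from uniqueness of the solution of $V\mathbf b=(G_0,\dots,G_{d-1})^t$ after applying $\sigma$, and integrality from $\Delta^2 b_j=\Delta\det V_j$ with $\Delta^2=\prod_{i<j}(\beta_i-\beta_j)^2\in\Z$. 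This is more elementary and arguably closer to what the lemma's label suggests; what the paper's route buys is the explicit denominator $P'(\beta_j)$, which pins down $D_0$ via the different--discriminant relation rather than via a crude $\Delta\cdot\det V_j$ bound. One small imprecision: $\Delta^2$ is the discriminant of the characteristic polynomial, i.e.\ of the order $\Z[\beta]$ in $\Q(\beta)$; identifying it with the discriminant of the order $\Z[\beta_1,\dots,\beta_d]$ inside the Galois closure $K$ is not correct when $[K:\Q]>d$. This does not affect anything downstream (Lemma~\ref{lem:c_est} only needs that some bounded rational integer $D_0$ clears the denominators), and the paper's own wording is equally loose on this point, but you should state your conclusion as ``$D_0=\discr(P)$ works'' rather than asserting the identification with the order in $K$.
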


\begin{proof}
 We know that $b_j=\frac{\beta_j^d -1}{\beta_j-1} \frac{1}{P'(\beta_j)}$, where $P$ is the characteristic polynomial of $G$
 (e.g. see \cite[page 2 resp. Section 4]{steiner}). That $D_0$ can be chosen to be the discriminant of the order 
$\Z[\beta_1,\dots,\beta_{d}]$ is due to the definition of the different and discriminant (see e.g. \cite[Section III.2]{Neukirch:ANTE})
and some well-known relations between them.
\end{proof}

Let us fix the corner $\mathbf h\in\{0,1\}^s$ and let $I=\{i\: : \: h^{(i)}=0\}$ and $J=\{j\: :\: h^{(j)}=1\}$. 
If $i\in I$ let $n_i$ be such that 
$$
N= \varepsilon_{n_i}^{(i)}(N) G^{(i)}_{n_i}+ \dots+ \varepsilon_{N_i}^{(i)}(N) G^{(i)}_{N_i},
$$
with $\varepsilon_{n_i}^{(i)}(N)\neq 0$. If $i\in J$ let $n_i$ be maximal such that $\varepsilon_{\ell}^{(i)}(N)= a_0^{(i)}$ for $\ell=0$ and all $\ell<n_i$ 
such that $k_i+1\nmid \ell$, $\varepsilon_{\ell}^{(i)}(N)= a_0^{(i)}-1$ for all $1\leq \ell < n_i$ such that $k_i+1| \ell$.
Then Lemma \ref{lem:extreme} yields
\begin{equation}\label{Eq:CornerAvoid}\left\|\psi_{\boldsymbol \beta}(N)\right\|_{\mathbf h}>c_1 \prod_{i=1}^s \left|\beta^{(i)}\right|^{-n_i}\end{equation}
for some effective computable constant $c_1$. For instance one can choose
$$c_1=\prod_{i=1}^s\left(\frac{\left(\beta^{(i)}\right)^{d_i-1}}{\left(\beta^{(i)}\right)^{d_i-1} + \left(\beta^{(i)}\right)^{d_i-2} +\dots + 1}\right).$$
Thus in order to prove Theorem \ref{th:corners} it is enough to prove
$$ \prod_{i=1}^s \left|\beta^{(i)} \right|^{-n_i}>\frac{\tilde C_{\varepsilon,\boldsymbol \beta}}{N^{H/2+\varepsilon}} $$
with $n_i$ as defined above and some constant $\tilde C_{\varepsilon,\boldsymbol \beta}$. Using the explicit formula \eqref{eq:Rec_explicit} for $G^{(i)}_n$ we 
obtain for $i\in I$
\begin{equation}\label{eq:corner_x}
N= \sum_{j=1}^{d_i} x^{(i)}_j \;\; \text{with} \;\; x_j^{(i)}=\left(\beta_j^{(i)}\right)^{n_i}
\stackrel{:=c_j^{(i)}}{\overbrace{\sum_{n=0}^{N_i-n_i} b_j^{(i)}\varepsilon_{n_i+n}^{(i)}(N) \left(\beta_j^{(i)}\right)^n}}\quad \forall i \in I.
\end{equation}
In case of $i\in J$ we want to decompose $N$ into a sum similar to \eqref{eq:corner_x}. Therefore let us compute
\begin{align*}
 N=& \sum_{j=1}^{d_i} \left(\sum_{n=0}^{n_i-1}\varepsilon_n^{(i)}(N) b_j^{(i)}\left(\beta_j^{(i)}\right)^n
 +\sum_{n=n_i}^{N_i} \varepsilon_n^{(i)}(N) b_j^{(i)}\left(\beta_j^{(i)}\right)^n\right)\\
=&  \sum_{j=1}^{d_i} \left(\sum_{n=0}^{n_i-1}\varepsilon_n^{(i)}(N) b_j^{(i)}\left(\beta_j^{(i)}\right)^n
 +\left(\beta_j^{(i)}\right)^{n_i}\stackrel{:= \tilde c_j^{(i)}}
 {\overbrace{\sum_{n=0}^{N_i-n_i} b_j^{(i)}\varepsilon_{n_i+n}^{(i)}(N) 
\left(\beta_j^{(i)}\right)^n}}\right)\\
=& 1+\sum_{j=1}^{d_i}\left( a_0 \sum_{n=0}^{n_i-1} b_j^{(i)}\left(\beta_j^{(i)}\right)^n
-\sum_ {n=0}^{\left\lfloor \frac{n_i-1}{k_i+1}\right\rfloor}b_j^{(i)}\left(\beta_j^{(i)}\right)^{n(k_i+1)}+\tilde c_j^{(i)} 
\left(\beta_j^{(i)}\right)^{n_i}\right)\\
=& 1+\sum_{j=1}^{d_i} \left(b_j^{(i)}\frac{a_0\left(1-\left(\beta_j^{(i)}\right)^{n_i}\right)}{1-\beta_j^{(i)}}-
b_j^{(i)}\frac{1-\left(\beta_j^{(i)}\right)^{\left\lceil \frac{n_i}{k_i+1} \right\rceil(k_i+1)}}
{1-\left(\beta_j^{(i)}\right)^{k_i+1}} +\tilde c_j^{(i)} \left(\beta_j^{(i)}\right)^{n_i}\right)\\
=& 
\stackrel{:=A_i}{\overbrace{1+\sum_{j=1}^{d_i}\frac{a_0b_j^{(i)}}{1-\beta_j^{(i)}}-\sum_{j=1}^{d_i}\frac{b_j^{(i)}}{1-\left(\beta_j^{(i)}\right)^{k_i+1}}}}
\\
&+ \sum_{j=1}^{d_i}\left(\beta_j^{(i)}\right)^{n_i}\stackrel{:=c_j^{(i)}}{\overbrace{\left(\tilde c_j^{(i)}- \frac{a_0b_j^{(i)}}{1-\beta_j^{(i)}}+ 
\frac{b_j^{(i)}\left(\beta_j^{(i)}\right)^{\left\lceil \frac{n_i}{k_i+1} 
\right\rceil(k_i+1)-n_i}}{1-\left(\beta_j^{(i)}\right)^{k_i+1}}\right)}}\\
=& A_i+\sum_{j=1}^{d_i}\left(\beta_j^{(i)}\right)^{n_i} c_j^{(i)}
\end{align*}
Note that 
$$0\leq \left\lceil \frac{n_i}{k_i+1} \right\rceil(k_i+1)-n_i<k_i+1.$$
Therefore we obtain for $i\in J$ the equation $N-A_i= \sum_{j=1}^{d_i} x^{(i)}_j$,
where $x^{(i)}_j=\left(\beta_j^{(i)}\right)^{n_i} c_j^{(i)}$. In order to unify this equation with equation \eqref{eq:corner_x} we put $A_i=0$ 
for $i\in I$. Moreover, we define $\mathcal A=\{A_i\: :\: 1\leq i\leq s\}$ and obtain
\begin{equation}\label{eq:corner_y}
N-A_i= \sum_{j=1}^{d_i} x^{(i)}_j
\end{equation}
for all $1\leq i \leq s$. Further we define 
$$I_a:=\{i \: :\: A_i=a\} \quad \forall a\in\mathcal A,$$
which yields a partition of the set $\{1,2,\dots,s\}$ and we put $i_a=\min I_a$ for all $a\in \mathcal A$. Finally let us note that without 
loss of generality we may assume that $0\in\mathcal A$. Since otherwise we replace $N$ by $N-a$ for some $a\in\mathcal A$. By such a change of $N$ we 
change only the quantities $A_i$ and $c_j^{(i)}$ for $1\leq i \leq s$, $1\leq j\leq d_j$. However the proof of Theorem \ref{th:corners} only depends on the 
size of these 
quantities but not on their specific form.

\begin{remark}
Let us emphasize here that inequality \eqref{eq:CornerAvoid} in Theorem \ref{th:corners} can be replaced by 
\begin{equation}\label{ieq:strong_corner}
\|\psi_{\boldsymbol \beta}(N)\|_{\mathbf h}>\frac{C_ {\varepsilon,\boldsymbol \beta}}{N^{\max\{2,|\mathcal A|\}/2+\varepsilon}}.
\end{equation}
Indeed we will prove this inequality which implies inequality \eqref{eq:CornerAvoid} since obviously we have $H\geq \max\{2,|\mathcal A|\}$.
\end{remark}

In our considerations below it will be important to estimate the quantities $A_i$ for $1\leq i \leq s$ and $c_j^{(i)}$ for $1\leq i \leq s$, $2\leq j \leq 
d_i$. Moreover let $D$ be the common denominator of all $a\in \mathcal A$ and all $c_j^{(i)}$. Thus $D$ is the 
smallest positive, rational integer $D$ such that $Dc_j^{(i)}$ and $Da$ are algebraic integers for all $1\leq i\leq s$, $1\leq j\leq d_i$ and
$a\in\mathcal A$. Also an estimate of the quantity $D$ will be needed in the course of the proof of Theorem \ref{th:corners}. Therefore we prove the following 
lemma:

\begin{lemma}\label{lem:c_est}
 With the notations above we have $\left|c_j^{(i)}\right|,|A_i|,|D|\leq C$, provided $j\neq 1$, where $C$ depends only on the $s$-tuple 
of numeration systems $(G^{(1)},\dots, G^{(s)})$. Moreover, we have $A_i\in\Q$.
\end{lemma}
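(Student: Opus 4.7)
The plan is to handle the three assertions separately. In every case the underlying fact is that $\beta^{(i)}$ is Pisot, so $|\beta_j^{(i)}|<1$ for $j\geq 2$, and that $b_j^{(i)}$, $\beta_j^{(i)}$ and the various fixed sums built from them are completely determined by the numeration systems $G^{(i)}$.

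For $A_i\in\Q$, the case $i\in I$ is trivial since $A_i=0$ by convention. For $i\in J$, I would invoke Lemma~\ref{lem:Cramer}: any $\sigma\in\Gal(K^{(i)}/\Q)$ sends $(b_j^{(i)},\beta_j^{(i)})$ to $(b_{\pi(j)}^{(i)},\beta_{\pi(j)}^{(i)})$ for some permutation $\pi$, so both
\[
\sum_{j=1}^{d_i}\frac{a_0^{(i)}b_j^{(i)}}{1-\beta_j^{(i)}},\qquad \sum_{j=1}^{d_i}\frac{b_j^{(i)}}{1-(\beta_j^{(i)})^{k_i+1}}
\]
are $\Gal(K^{(i)}/\Q)$-invariant, hence rational. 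The same expressions are fixed (independent of $N$) algebraic numbers, so $|A_i|$ is trivially bounded by a constant depending only on $G^{(i)}$.

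To bound $|c_j^{(i)}|$ for $j\neq 1$ I would exploit $|\beta_j^{(i)}|<1$ together with the digit bound $\varepsilon_n^{(i)}(N)\leq a_0^{(i)}$. For $i\in I$ the finite sum is dominated by a geometric series, giving $|c_j^{(i)}|\leq a_0^{(i)}|b_j^{(i)}|/(1-|\beta_j^{(i)}|)$. For $i\in J$ the same estimate covers the $\tilde c_j^{(i)}$-piece; the two correction terms $-a_0^{(i)}b_j^{(i)}/(1-\beta_j^{(i)})$ and $b_j^{(i)}(\beta_j^{(i)})^{\lceil n_i/(k_i+1)\rceil(k_i+1)-n_i}/(1-(\beta_j^{(i)})^{k_i+1})$ are manifestly bounded, since the exponent lies in $\{0,\dots,k_i\}$ and the other factors are fixed.

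The most delicate part is the bound $|D|\leq C$, which is where I expect the real obstacle to be: although the quantities $c_j^{(i)}$ involve variable numbers of digits of $N$, we need a common denominator $D$ that does \emph{not} depend on $N$. The remedy is to notice that, by Lemma~\ref{lem:Cramer}, the integer $D_0^{(i)}=\discr(\Z[\beta_1^{(i)},\dots,\beta_{d_i}^{(i)}])$ simultaneously clears every term $b_j^{(i)}(\beta_j^{(i)})^n\varepsilon_{n_i+n}^{(i)}(N)$, so $D_0^{(i)}\tilde c_j^{(i)}$ and $D_0^{(i)}c_j^{(i)}$ (for $i\in I$) are algebraic integers regardless of $N$. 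The remaining contributions to $A_i$ and to $c_j^{(i)}$ in the case $i\in J$ are the fixed rational fractions with denominators $1-\beta_j^{(i)}$ or $1-(\beta_j^{(i)})^{k_i+1}$, which are cleared by multiplying by the rational integers $N_{K^{(i)}/\Q}(1-\beta^{(i)})$ and $N_{K^{(i)}/\Q}(1-(\beta^{(i)})^{k_i+1})$. Taking $D$ to be the product of all these factors for $i=1,\dots,s$ yields an $N$-independent denominator depending solely on the tuple $(G^{(1)},\dots,G^{(s)})$, completing the lemma.
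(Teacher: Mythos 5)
Your proposal is correct and follows essentially the same route as the paper: the bounds on $c_j^{(i)}$ and $A_i$ via geometric series and the fact that the remaining terms are fixed data of the numeration systems, rationality of $A_i$ via the conjugacy statement of Lemma~\ref{lem:Cramer} (the paper phrases this as writing $A_i$ as a trace $\mathrm{Tr}_{\Q(\beta^{(i)})/\Q}(\cdot)$, which is the same Galois-invariance argument), and the denominator cleared by $D_0$ together with $N$-independent factors. You are in fact somewhat more explicit than the paper about which integers clear the denominators of the fractions with $1-\beta_j^{(i)}$ and $1-(\beta_j^{(i)})^{k_i+1}$, which the paper only asserts.
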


\begin{proof}
 First, note that $c_j^{(i)}$ in case of $i\in I$ and $\tilde c_j^{(i)}$ in case of $i \in J$ can be estimated for all $2\leq j \leq d_i$ by a geometric 
series, hence both are bounded by 
a constant neither depending on $N$ nor $n_i$. Moreover, for fixed $i$ the common denominator is the integer $D_0$ from Lemma \ref{lem:Cramer}. But also the 
other components of $c_j^{(i)}$ in case of $i\in J$ only depend on $\beta^{(i)}_j$ and the constants $a_j^{(i)}$. Hence $\left|c_j^{(i)}\right|\leq C$ for 
all $1\leq i \leq s$, $2\leq j\leq d_i$ and a sufficiently large constant $C$. Also the common denominator of all $c_j^{(i)}$ is bounded by a constant 
neither depending on $N$ nor on $n_i$. Similarly we obtain $|A_i|\leq C$ and the denominators of all $A_i$ are again bounded by a 
constant. Hence, also $|D|\leq C$ for a sufficiently large constant $C$.

Therefore we are left to show that $A_i$ is 
rational for all $1\leq i \leq s$. But $A_i$ can be written as the trace of some algebraic number $\alpha \in K^{(i)}$ and is therefore rational. Indeed we 
have $A_i=0$ for all $i\in I_0$ and for all other indices $i$ we have
\begin{multline*}
A_i=1+\sum_{j=1}^{d_i}\frac{a_0b_j^{(i)}}{1-\beta_j^{(i)}}-\sum_{j=1}^{d_i}\frac{b_j^{(i)}}{1-(\beta_j^{(i)})^{k_i+1}}\\
=\mathrm{Tr}_{\Q(\beta^{(i)})/\Q}\left(\frac{1}{d_i}+\frac{a_0b_1^{(i)}}{1-\beta^{(i)}}-\frac{b_1^{(i)}}{1-(\beta^{(i)})^{k_i+1}} \right)\in\Q.
\end{multline*}
\end{proof}

\section{Application of the Subspace Theorem}\label{Sec:Subspace}

A key step proving Theorem \ref{th:corners} is the following Proposition:

\begin{proposition}\label{prop:2corner}
Let $a,b\in \mathcal A$ not necessarily distinct and $\varepsilon>0$. Then there exists a constant $C_{\varepsilon}$ such that
\begin{equation}\label{eq:2corner_estimate}
C_{\varepsilon} N^{1+\varepsilon}> \prod_{i\in I_a \cup I_b} \left| \beta_1^{(i)} \right|^{n_i}.
\end{equation}
\end{proposition}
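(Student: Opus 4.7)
The plan is to invoke Schmidt's Subspace Theorem (Theorem~\ref{th:Subspace}). Write $t = |I_a \cup I_b|$ and $I_a \cup I_b = \{i_1, \ldots, i_t\}$. The case $t \le 1$ is elementary: Lemma~\ref{lem:c_est} uniformly bounds $|c_j^{(i)}|$ for $j \ge 2$, so the norm inequality $|\Norm_{\Q(\beta^{(i)})/\Q}(Dc_1^{(i)})| \ge 1$ forces $|c_1^{(i)}|$ to exceed a positive constant; combining this with $(\beta_1^{(i)})^{n_i} c_1^{(i)} = N - A_i + O(\gamma_i^{n_i})$, where $\gamma_i = \max_{j \ge 2}|\beta_j^{(i)}| < 1$ by the Pisot property, yields $|\beta_1^{(i)}|^{n_i} \le CN$. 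Assume $t \ge 2$ henceforth.

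Work in the compositum $K = K^{(i_1)} \cdots K^{(i_t)}$, which by the hypothesis $K^{(i)} \cap K^{(j)} = \Q$ satisfies $[K:\Q] = \prod_l [K^{(i_l)}:\Q]$ and $\Gal(K/\Q) \cong \prod_l \Gamma^{(i_l)}$. Consider the algebraic-integer vector $\mathbf{X} = (X_{-1}, X_0, (X_j^{(i_l)})_{l,j}) \in \ord_K^n$ of size $n = 2 + \sum_l d_{i_l}$, with $X_{-1} = D$, $X_0 = DN$, and $X_j^{(i_l)} = D\,x_j^{(i_l)}$; the decomposition \eqref{eq:corner_y} translates into the relations $\sum_j X_j^{(i_l)} = X_0 - A_{i_l} X_{-1}$ for each $l$. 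At each archimedean place $\nu$ of $K$ corresponding to an embedding $\tau$, I would use the $n$ linearly independent forms $X_{-1}$, $X_0$, the forms $L_l^\nu(\mathbf{X}) = X_{k_l(\tau)}^{(i_l)} - X_0 + A_{i_l} X_{-1}$ for $l = 1, \ldots, t$ (where $k_l(\tau)$ is the unique index with $\tau(\beta_{k_l(\tau)}^{(i_l)}) = \beta_1^{(i_l)}$), together with the coordinate forms $X_j^{(i_l)}$ for $j \ne k_l(\tau)$. Using the relation, $L_l^\nu(\mathbf{X}) = -\sum_{j \ne k_l(\tau)} X_j^{(i_l)}$ evaluates at $\tau$ to $-\sum_{j' \ne 1} X_{j'}^{(i_l)}$, of magnitude $\le C\gamma_{i_l}^{n_{i_l}}$, and each remaining coordinate form also yields a small $\tau$-value. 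Enlarging $S$ to include the non-archimedean places dividing $D$ and the constant terms $a_{d_{i_l}-1}^{(i_l)}$ (which appear in $\prod_j \beta_j^{(i_l)}$), and combining via the product formula $\prod_\nu |\alpha|_\nu = 1$, one bounds $P = \prod_{\nu \in S}\prod_j |L_j^\nu(\mathbf{X})|_\nu$ from above; since $\overline{|\mathbf{X}|} \asymp N$, Theorem~\ref{th:Subspace} with $\delta = \varepsilon$ then either yields $\prod_l |\beta_1^{(i_l)}|^{n_{i_l}} \le C_\varepsilon N^{1+\varepsilon}$ or forces $\mathbf{X}$ into one of finitely many proper $K$-subspaces.

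The principal obstacle is twofold: first, balancing the archimedean against the non-archimedean contributions in $P$ so that the exponent works out precisely to $1 + \varepsilon$ after inverting the product inequality (this is delicate because $\log(|\beta_1^{(i_l)}|)$ and $\log(|\beta_1^{(i_l)}|/A_{i_l})$ differ); and second, handling the exceptional subspaces, each of which produces a nontrivial $K$-linear relation among $\{D, DN, Dx_j^{(i_l)}\}_{l, j}$. I would dispatch the latter by combining the hypothesis $(\beta^{(i)})^k/(\beta^{(j)})^l \notin \Q$ for $i \ne j$ (from Theorem~\ref{main1}, forbidding multiplicative dependencies between distinct characteristic roots) with Galois conjugation of the relation and induction on $t$: any such relation either confines $N$ to a finite set (where the bound is trivial) or reduces to a $(t-1)$-index instance of the proposition, whose extra contribution is absorbed into the $N^\varepsilon$ slack. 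Careful bookkeeping of Galois orbits and of the denominator $D$ is essential for this alignment.
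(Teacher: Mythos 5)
Your overall strategy --- pass to the compositum $K$, encode the expansions via \eqref{eq:corner_y}, choose at each Archimedean place linear forms whose values at the distinguished embedding collapse to the bounded quantity $\sum_{j\neq 1}x_j^{(i)}$, and invoke Theorem~\ref{th:Subspace} --- is the paper's strategy, and your bookkeeping for the product $P$ (product formula for the $j\geq 2$ coordinates, the norm of $\prod_j\beta_j^{(i)}$, $\overline{|\mathbf X|}\asymp N$) would indeed reproduce the paper's estimate \eqref{eq:Subspace_est}. But your choice of coordinate vector breaks the application of the Subspace Theorem: since you keep $X_{-1}=D$, $X_0=DN$ \emph{and} all $d_{i_l}$ coordinates $X_j^{(i_l)}$, every solution vector identically satisfies the homogeneous relation $\sum_{j}X_j^{(i_l)}-X_0+A_{i_l}X_{-1}=0$, whose coefficients lie in $K$ and do not all vanish. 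Thus all your points already lie in a fixed proper subspace of $K^n$, the conclusion of Theorem~\ref{th:Subspace} is vacuously true, and the dichotomy ``either the bound holds or $\mathbf X$ lies in finitely many proper subspaces'' yields nothing: one of those subspaces contains every solution, and analysing it is the original problem. This is precisely why the paper eliminates the redundant coordinates, taking $X=\{X_j^{(i)}:j\geq 2\}\cup\{X_1^{(i_0)},X_1^{(i_a)}\}$ and using $\sum_j x_j^{(i)}=N-A_i$ to remove $x_1^{(i)}$ for $i\neq i_0,i_a$; the remaining constraints are affine in $N$ (not a coordinate), so no homogeneous relation is built in, and Lemma~\ref{lem:finite_sol} can then show that any genuine linear relation forces $\overline{|\mathbf x|}$ to be bounded.

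Your treatment of the exceptional subspaces is also not viable as sketched. ``Induction on $t$'' with the discarded index ``absorbed into the $N^\varepsilon$ slack'' cannot work: a single factor $|\beta_1^{(i)}|^{n_i}$ may be as large as a constant times $N$, so passing from $t$ to $t-1$ indices costs a factor $N$, not $N^\varepsilon$; the target exponent $1+\varepsilon$ is independent of $t$ exactly because the subspace relations must force \emph{boundedness} of $N$ (finitely many exceptional $N$), not a weaker estimate. Moreover, the hypothesis that accomplishes this is the linear disjointness $K^{(i)}\cap K^{(j)}=\Q$, which supplies Galois elements acting nontrivially on a single factor $K^{(\ell)}$ and lets one conjugate the subspace relation so that the large coordinate moves, forcing the coefficients $t_j^{(i)}$ into the contradictory pattern of Lemma~\ref{lem:finite_sol}; the multiplicative independence condition $(\beta^{(i)})^k/(\beta^{(j)})^l\notin\Q$ that you cite is used only for ergodicity in Theorem~\ref{main1} and plays no role here. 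Both issues are essential and must be repaired before the argument closes.
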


Once we have established \eqref{eq:2corner_estimate} it is easy to deduce inequality \eqref{ieq:strong_corner} and hence Theorem \ref{th:corners}.
The case $|\mathcal A|\leq 2$ is a direct consequence of Proposition \ref{prop:2corner} and inequality~\eqref{Eq:CornerAvoid}. Therefore let us assume
that $|\mathcal A|>2$. Then according to Proposition \ref{prop:2corner} there exists a constant $C=C_{2\varepsilon/|\mathcal A|}$ such that
\begin{equation*}
\begin{split}
\left(C N^{1+2\varepsilon/|\mathcal A|}\right)^{\frac{|\mathcal A|(|\mathcal A|-1)}2}>&\prod_{a,b\in\mathcal A\atop a<b} \prod_{i\in I_a\cup 
I_b} \left| \beta_1^{(i)} \right|^{n_i}=\prod_{i=1}^s \left| \beta_1^{(i)} \right|^{(|\mathcal A|-1)n_i}\\
\geq & \frac{c_1^{|\mathcal A|-1}}{\left\|\psi_{\boldsymbol \beta}(N)\right\|_{\mathbf h}^{|\mathcal A|-1}}
\end{split}
\end{equation*}
which immediately yields inequality \eqref{ieq:strong_corner}.

Therefore our task is to prove Proposition \ref{prop:2corner}, which takes the rest of the paper. Without loss of generality we may assume that $b=0$ in   
Proposition \ref{prop:2corner} and we fix $a\in\mathcal A$. Moreover, we fix the number field $K$ which is the compositum of the number fields 
$K^{(i)}$ with $i\in I_0 \cup I_a$. Further we denote by $\Gamma$ the Galois group of $K$. Since we assume that all the fields $K^{(i)}$ are linearly 
disjoint we have $\Gamma=\prod_{i \in I_0 \cup I_a} \Gamma^{(i)}$.

In order to apply the Subspace Theorem, we have to determine which valuations are of 
particular interest in view of Theorem \ref{th:corners}.
Thus let 
\[S^{(i)}_j=\left\{\nu\in M(K)\: :\: \nu=|\cdot|_\p, \,\p\left|\left(\beta_j^{(i)}\right)\right.\right\}\]
and
\[\Sfin=\bigcup_{i\in I_0\cup I_a} \bigcup_{j=1}^{d_i} S^{(i)}_j\]
In particular, $\Sfin$ is the set of finite places of $M(K)$ that lie above all the primes that divide some principal ideal $\left(\beta_j^{(i)}\right)$ 
with $i\in  I_0\cup I_a$ and $1\leq j\leq d_i$. Moreover let
\[\Sinf=\left\{\nu\in M(K)\: :\: \nu=|\cdot|_\sigma, \, \sigma\in \Gamma\right\}\]
be the set of all canonical Archimedean absolute values in $K$. In view of Theorem~\ref{th:Subspace} we write $S=\Sfin\cup \Sinf$.

Our next step is to construct linear forms $L_{\nu,j}^{(i)}$ for all $\nu\in S$. The aim is to find linear forms that match the conditions of the Subspace 
Theorem on the one hand and yield small values when they are evaluated at the point $\mathbf x=\left(x_j^{(i)}\right)$ on the other hand. Before we  
construct these linear forms we want to note that for $i\in I_a$ we have
$$ x_1^{(i_a)}+\dots+x_{d_{i_a}}^{(i_a)}=N-a=x_1^{(i)}+\dots+x_{d_{i}}^{(i)}.$$
Thus we can express $x_1^{(i)}$ as a linear combination of $x_j^{(i_a)}$ for $1\leq j \leq d_{i_a}$ and $x_j^{(i)}$ for $2\leq j \leq 
d_{i}$. Loosely speaking we can eliminate the unknown $x_1^{(i)}$ provided $i\neq i_a$ and $i\in I_a$. Therefore
$$X=\left\{X^{(i)}_j \: : \: i\in I_0\cup I_a,\;\;  2\leq j \leq d_i\right\} \cup \left\{X^{(i_0)}_1, X^{(i_a)}_1 \right\}$$
is the set of indeterminates over which we construct our linear forms $L_{\nu,j}^{(i)}\in K[X]$.

First, we consider the linear forms for non-Archimedean places.
Therefore let us assume that $\nu\in S^{(\ell)}=\bigcup_{m=1}^{d_\ell} S_m^{(\ell)}$. Then we define:
\begin{itemize}
 \item If $(i,j)=(i_a,1)$ and $\ell\in I_{a}\setminus\{i_a\}$, then
 $$L_{\nu,1}^{(i_a)}(X)=\left(X_1^{(i_a)}+\dots+X_{d_{i_a}}^{(i_a)}\right)-\left(X_2^{(\ell)}+\dots+X_{d_{\ell}}^{(\ell)}\right).$$
 \item and in all other cases
 $$L_{\nu,j}^{(i)}(X)=X_j^{(i)}.$$
\end{itemize}
Note that for a fixed valuation $\nu\in S^{(\ell)}_m$ the linear forms $L_{\nu,j}^{(i)}(X)$ are linearly independent over $K$. Let us remind that by the 
product formula \eqref{eq:ProductFormula} we have
\[\prod_{\nu \in M(K) \atop \nu \; \text{non-Archimedean}} |\alpha|_\nu=\frac{1}{\left|N_{K/\Q}(\alpha)\right|}.\]




Now, let us turn to the Archimedean absolute values. As explained above every $\nu\in \Sinf$ corresponds to some $\sigma \in\Gamma$. Let us emphasize
again that $\sigma$ is unique up to complex conjugation, and every $\sigma\in \Gamma$ determines a $\nu \in \Sinf$. Let us assume for the moment that $\sigma$ 
is a complex embedding. Then we also write 
$L_{\nu,j}^{(i)}=L_{\sigma,j}^{(i)}$ for some $\sigma\in \Gamma$. Note that $L_{\sigma,j}^{(i)}=L_{\bar \sigma,j}^{(i)}$ for all $\sigma \in \Gamma$. Moreover, 
let us write $\|\cdot\|_\sigma=|\cdot|_\nu^{1/2}$ thus we have $|\cdot|_\nu=\|\cdot\|_\sigma \|\cdot\|_{\bar \sigma}$. Altogether we have with these notations
$$\prod_{\nu \in \Sinf} \left|L_{\nu,j}^{(i)}(\mathbf x)\right|_\nu=\prod_{\sigma \in \Gamma} \left\|L_{\sigma,j}^{(i)}(\mathbf x)\right\|_\sigma.$$
Now we have enough notations to define our linear forms. In case that $\sigma\left(\beta_j^{(i)}\right)\neq \beta_1^{(i)}$ we define
\[L_{\sigma,j}^{(i)}(X)=X^{(i)}_j.\]
In case that $\sigma(\beta_j^{(i)})= \beta_1^{(i)}$ and
\begin{itemize}
\item if $i\in I_a$ and $i\neq i_a$, then we put
\[L_{\sigma,j}^{(i)}(X)=\left(X^{(i_a)}_1+\cdots+X^{(i_a)}_{d_{i_a}}\right)-\left(X^{(i)}_2+\cdots+X^{(i)}_{d_{i}}\right),\]
\item if $i=i_a$ and $a\neq 0$, then we put
\[L_{\sigma,j}^{(i_a)}(X)=\left(X^{(i_0)}_1+\cdots+X^{(i_0)}_{d_{i_0}}\right)-\left(X^{(i_a)}_1+\cdots+X^{(i_a)}_{d_{i_a}}\right),\]
\item
and finally if $i=i_0$, then we put $L_{\sigma,j}^{(i_0)}(X)=X^{(i_0)}_{j}$.
\end{itemize}
Since by assumption $\beta_1^{(i)}$ is Pisot and therefore real, $\sigma\left(\beta_j^{(i)}\right)=\beta_1^{(i)}$ if and only if 
$\bar\sigma\left(\beta_j^{(i)}\right)=\beta_1^{(i)}$. Therefore the linear forms $L_{\nu,j}^{(i)}$ are well defined.

Our next task is to compute
$$\prod_{j} \prod_{\nu\in S} \left|L_{\nu,j}^{(i)}(\mathbf x)\right|_\nu$$
for fixed $i\in I_0\cup I_a$. Let us consider the case $i\neq i_0, i_a$, first. In this case we have $j>1$. 
Note that in case that  $\sigma\left(\beta^{(i)}_j\right)=\beta_1^{(i)}$ and $i\in I_a\setminus\{i_a\}$ we have
$$L_{\sigma,j}^{(i)}(\mathbf x)=
\stackrel{=N-a}{\overbrace{\left(x^{(i_a)}_1+\cdots+x^{(i_a)}_{d_{i_a}}\right)}}-
\stackrel{=N-a-x_1^{(i)}}{\overbrace{\left(x^{(i)}_2+\cdots+x^{(i)}_{d_{i}}\right)}}
=x_1^{(i)}.$$
With this remark we obtain
$$
\prod_{j=2}^{d_i} \prod_{\nu \in S} \left|L_{\nu,j}^{(i)}(\mathbf x)\right|_\nu=\prod_{j=2}^{d_i} \prod_{\nu \in S} \left|x_j^{(i)}\right|_\nu \times
\prod_{j=2}^{d_i} \prod_{\sigma \in \Gamma \atop \sigma\left(\beta_j^{(i)}\right)=\beta_1^{(i)}} 
\frac{\left\|x_1^{(i)}\right\|_\sigma}{\left\|x_j^{(i)}\right\|_\sigma}
$$
The first product on the right side can be estimated by the product formula, i.e. we have 
$$\prod_{\nu \in S} \left|x_j^{(i)}\right|_\nu = \prod_{\nu \in S} \left|c_j^{(i)}\right|_\nu \times \prod_{\nu \in S} \left|\beta_j^{(i)}\right|_\nu =
\prod_{\nu \in S} \left|c_j^{(i)}\right|_\nu.$$
For the second product note that
$$
\prod_{j=2}^{d_i} \prod_{\sigma \in \Gamma \atop \sigma\left(\beta_j^{(i)}\right)=\beta_1^{(i)}} 
\frac{\left\|x_1^{(i)}\right\|_\sigma}{\left\|x_j^{(i)}\right\|_\sigma}= \prod_{j=1}^{d_i} \prod_{\sigma \in \Gamma \atop \sigma\left(\beta_j^{(i)}\right)=\beta_1^{(i)}} 
\frac{\left\|x_1^{(i)}\right\|_\sigma}{\left\|x_j^{(i)}\right\|_\sigma}=\prod_{\sigma \in \Gamma} 
\frac{\left\|x_1^{(i)}\right\|_\sigma}{\left|x_1^{(i)}\right|}=
\frac{\left|N_{K/\Q}\left(x_1^{(i)}\right)\right|}{\left|x_1^{(i)}\right|^{|\Gamma|}}.
$$
Therefore we obtain
\begin{equation}\label{eq:ProdEst1}
\begin{split}
\prod_{j=2}^{d_i} \prod_{\nu \in S}& \left|L_{\nu,j}^{(i)}(\mathbf x)\right|_\nu= 
 \prod_{j=2}^{d_i} \prod_{\nu \in S} \left|c_j^{(i)}\right|_\nu \times \frac{\left|N_{K/\Q}\left(x_1^{(i)}\right)\right|}{\left|x_1^{(i)}\right|^{|\Gamma|}}\\
= & \mathcal O\left(\frac{\left|c_1^{(i)}\right|^{|\Gamma|}\left|N_{K/\Q}\left(\left(\beta_1^{(i)}\right)^{n_i}\right)\right|}{\left|x_1^{(i)}\right|^{|\Gamma|}} 
\right)= \mathcal O\left(\frac{\left|N_{K/\Q}\left(\left(\beta_1^{(i)}\right)^{n_i}\right)\right|}{\left|\left(\beta_1^{(i)}\right)^{n_i}\right|^{|\Gamma|}} \right).
\end{split}
\end{equation}

Next, let us consider the case $i=i_a$ but $a\neq 0$. Let $\sigma\in \Gamma$ be such that 
$\sigma\left(\beta^{(i_a)}_j\right)=\beta_1^{(i_a)}$. Then we use the following fact:
$$L_{\sigma,j}^{(i_a)}(\mathbf x)=
\stackrel{=N-a}{\overbrace{\left(x^{(i_a)}_1+\cdots+x^{(i_a)}_{d_{i_a}}\right)}}-
\stackrel{=N}{\overbrace{\left(x^{(i_0)}_1+\cdots+x^{(i_0)}_{d_{i_0}}\right)}}=a.$$ 
Thus we get
\begin{equation*}
 \begin{split}
\prod_{j=1}^{d_{i_a}} & \prod_{\nu \in S} \left|L_{\nu,j}^{(i_a)}(\mathbf x)\right|_\nu\\
=& \prod_{j=1}^{d_{i_a}} \prod_{\nu \in S} \left|x_j^{(i_a)}\right|_\nu \times 
\prod_{j=1}^{d_{i_a}} \prod_{\sigma \in \Gamma \atop \sigma\left(\beta_j^{(i_a)}\right)=\beta_1^{(i_a)}} \frac{\|a\|_\sigma}{\left\|x_j^{(i_a)}\right\|_\sigma}
\times \prod_{\ell\in I_a \setminus \{i_a\}} \prod_{\nu \in S_1^{(\ell)}} \frac{\left|x_1^{(\ell)}\right|_\nu}{\left|x_1^{(i_a)}\right|_\nu}.
\end{split}
\end{equation*}
Here the first two products are similarly treated as in the case above. To estimate the third product let us note that for all $\ell\in I_a \setminus \{i_a\}$ and
all $\nu \in S_1^{(\ell)}$ we have that $\left|\left(\beta_1^{(i_a)}\right)^{n_{i_a}}\right|_\nu, \left|c_1^{(i_a)}\right|_\nu\geq 1$. Moreover,
we have $\left|c_1^{(\ell)}\right|_\nu=\mathcal O(1)$ provided that $\nu\in S_1^{(\ell)}$ and therefore
$$ \prod_{\nu \in S_1^{(\ell)}}\left|x_1^{(\ell)}\right|_\nu= \mathcal O \left(\prod_{\nu \in S_1^{(\ell)}}\left|\left(\beta_1^{(\ell)}\right)^{n_{i_a}}\right|\right)
= \mathcal O \left(\frac{1}{\left|N_{K/\Q}\left(\left(\beta_1^{(\ell)}\right)^{n_\ell}\right)\right|} \right).$$
Altogether this yields
\begin{equation}\label{eq:ProdEstia}
 \begin{split}
\prod_{j=1}^{d_{i_a}} & \prod_{\nu \in S} \left|L_{\nu,j}^{(i_a)}(\mathbf x)\right|_\nu\\
=& \prod_{j=1}^{d_{i_a}} \prod_{\nu \in S} \left|c_j^{(i_a)}\right|_\nu \times\frac{\left|N_{K/\Q}(a)\right|}{\left|x_1^{(i_a)}\right|^{|\Gamma|}} \times 
\prod_{\ell\in I_a \setminus \{i_a\}}\prod_{\nu \in S_1^{(\ell)}} \frac{1}{\left|\left(\beta_1^{(i_a)}\right)^{n_{i_a}}\right|_\nu}\cdot  
\frac{\left|x_1^{(\ell)}\right|_\nu}{\left|c_1^{(i_a)}\right|_\nu}\\
=& \mathcal O\left(\frac{1}{\left|\left(\beta_1^{(i_a)}\right)^{n_{i_a}}\right|^{|\Gamma|}\prod_{\ell\in I_a \setminus \{i_a\}} 
\left|N_{K/\Q}\left(\left(\beta_1^{(\ell)}\right)^{n_\ell}\right)\right| } \right).
\end{split}
\end{equation}

Finally let us consider the case $i=i_0$ and similarly as before we obtain the inequality
\begin{equation}\label{eq:ProdEsti0}
 \begin{split}
\prod_{j=1}^{d_{i_0}} & \prod_{\nu \in S} \left|L_{\nu,j}^{(i_0)}(\mathbf x)\right|_\nu= \prod_{j=1}^{d_{i_0}} \prod_{\nu \in S} \left|x_j^{(i_0)}\right|_\nu 
\times \prod_{\ell\in I_0 \setminus \{i_0\}} \prod_{\nu \in S_1^{(\ell)}} \frac{\left|x_1^{(\ell)}\right|_\nu}{\left|x_1^{(i_0)}\right|_\nu}\\
=& \mathcal O\left(\frac{1}{\prod_{\ell\in I_0 \setminus \{i_0\}} \left|N_{K/\Q}\left(\left(\beta_1^{(\ell)}\right)^{n_\ell}\right)\right| } \right)
\end{split}
\end{equation}

Let us combine the estimates \eqref{eq:ProdEst1}, \eqref{eq:ProdEstia} and \eqref{eq:ProdEsti0} and note that 
$c_1^{(i)}\left(\beta_1^{(i)}\right)^{n_i}=N+\mathcal O(1)$. Then we obtain
\begin{equation}\label{eq:Subspace_est}
\prod_{\nu \in S} \prod_{i,j} \left|L_{\nu, j}^{(i)}(\mathbf x) \right|_{\nu}=\mathcal O\left(\frac{N^{|\Gamma|}}{\prod_{i\in I_0 \cup I_a} 
\left(\left|\beta_1^{(i)}\right|^{n_i}\right)^{|\Gamma|}}\right)
\end{equation}

In order to apply the Subspace Theorem we have to compute $\overline{|\mathbf x|}$. Since $N-A_i=x_1^{(i)}+\dots+x_{d_i}^{(i)}$ and $x_j^{(i)}=\mathcal O(1)$ 
unless $j=1$ by Lemma \ref{lem:c_est}, we deduce that $x_1^{(i)}= N+\mathcal O(1)$ for all $i\in I_0 \cup I_a$. By Lemma~\ref{lem:Cramer} all the $x_j^{(i)}$ 
are 
conjugated, hence $\overline{|\mathbf x|}=N+\mathcal O(1).$ 

Now applying the Subspace Theorem we obtain 
$$\frac{\tilde C_{\varepsilon,\mathbf \beta}N^{|\Gamma|}}{\prod_{i\in I_0 \cup I_a} (|\beta_1^{(i)}|^{n_i})^{|\Gamma|}}>N^{-\varepsilon|\Gamma|}$$
for some constant $\tilde C_\varepsilon=C_\varepsilon^{|\Gamma|}$. Therefore either Proposition \ref{prop:2corner} holds or all solutions $\mathbf x$ subject to 
\eqref{eq:corner_y} lie on finitely many proper subspaces $T\subset K^{|X|}$. 
But, we can show that these subspaces contain only bounded solutions and in particular there exist at most finitely many exceptional $N$. In particular, the 
proof of Proposition \ref{prop:2corner} is complete by proving the following lemma:

\begin{lemma}\label{lem:finite_sol}
All solutions $\mathbf x$ subject to \eqref{eq:corner_y} that lie on a proper subspace $T\subset K^{|X|}$ are bounded, i.e. $\overline{|\mathbf x|}<C$ for 
some constant $C$.
\end{lemma}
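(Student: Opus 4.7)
\noindent\emph{Proof plan.} The idea is to show that $\mathbf x(N)\in T$ can occur only for $N$ in a bounded set; since $x_1^{(i_0)}=N+O(1)$ dominates the Archimedean size of $\mathbf x(N)$, such a bound on $N$ yields the claim on $\overline{|\mathbf x|}$.

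First, I would write $T\subseteq\{L=0\}$ for some nontrivial linear form
\[L(\mathbf X)=\alpha_0 X_1^{(i_0)}+\alpha_a X_1^{(i_a)}+\sum_{\substack{(i,j)\in X\\ j\geq 2}}\alpha_{i,j}\,X_j^{(i)},\]
with coefficients in $K$. Using \eqref{eq:corner_y} to substitute $x_1^{(i_0)}=N-\sum_{j\geq 2}x_j^{(i_0)}$ and $x_1^{(i_a)}=N-a-\sum_{j\geq 2}x_j^{(i_a)}$, the condition $L(\mathbf x(N))=0$ rewrites as
\[(\alpha_0+\alpha_a)\,N=\alpha_a a-R(N),\qquad R(N):=\sum_{\substack{(i,j)\\ j\geq 2}}\tilde\alpha_{i,j}\,c_j^{(i)}\bigl(\beta_j^{(i)}\bigr)^{n_i},\]
where the $\tilde\alpha_{i,j}$ are explicit combinations of the $\alpha$'s. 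By Lemma~\ref{lem:c_est} the constants $c_j^{(i)}$ are uniformly bounded, and since $\beta^{(i)}$ is Pisot we have $|\beta_j^{(i)}|<1$ for $j\geq 2$, so $|R(N)|\leq C_0$ for a constant $C_0$ independent of $N$. In the case $\alpha_0+\alpha_a\neq 0$ this immediately gives $|N|\leq(|\alpha_a a|+C_0)/|\alpha_0+\alpha_a|$, and we are done.

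The main obstacle is the degenerate case $\alpha_0+\alpha_a=0$, where the identity reduces to $R(N)=\alpha_a a$, a relation bounded in $N$ that gives no direct control on $N$. To handle it I would exploit the Galois group $\Gamma=\prod_i\Gamma^{(i)}$ (the product decomposition is available because of the linear disjointness hypothesis $K^{(i)}\cap K^{(j)}=\mathbb{Q}$). For any pair $(i_1,j_0)$ with $j_0\geq 2$, pick $\sigma^{(i_1)}\in\Gamma^{(i_1)}$ mapping $\beta_{j_0}^{(i_1)}\mapsto\beta_1^{(i_1)}$ (possible by the transitivity of $\Gamma^{(i_1)}$ on conjugates, since the characteristic polynomial is irreducible) and extend by the identity on the other factors to $\sigma\in\Gamma$. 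By Lemma~\ref{lem:Cramer}, $\sigma(x_j^{(i)})=x_{\pi_i(j)}^{(i)}$ for the induced permutations $\pi_i$; in particular $\pi_{i_1}(j_0)=1$ while $\pi_{i_1}(l)\geq 2$ for $l\neq j_0$, and $\pi_i=\mathrm{id}$ for $i\neq i_1$. Applying $\sigma$ to $R(N)=\alpha_a a$ yields
\[\sigma(\tilde\alpha_{i_1,j_0})\,x_1^{(i_1)}+(\text{bounded in }N)=\sigma(\alpha_a)\,a.\]
If $\mathbf x(N)\in T$ for infinitely many $N$, letting $N\to\infty$ along that set forces $\sigma(\tilde\alpha_{i_1,j_0})=0$, hence $\tilde\alpha_{i_1,j_0}=0$. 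Since $(i_1,j_0)$ was arbitrary, every $\tilde\alpha_{i,j}$ vanishes; back-solving shows $\alpha_{i,j}=0$ for $i\notin\{i_0,i_a\}$ and $\alpha_{i_0,j}=\alpha_0$, $\alpha_{i_a,j}=\alpha_a$ for $j\geq 2$, so $L=\alpha_0\sum_{j=1}^{d_{i_0}} X_j^{(i_0)}+\alpha_a\sum_{j=1}^{d_{i_a}} X_j^{(i_a)}$. Evaluating at $\mathbf x(N)$ using $\sum_j x_j^{(i_0)}=N$ and $\sum_j x_j^{(i_a)}=N-a$ then gives $L(\mathbf x(N))=-\alpha_a a$, which vanishes only if $\alpha_a=0$ (assuming without loss of generality $a\neq 0$; the case $a=0$ collapses the two classes and falls under a single-variable version of the same argument), and hence also $\alpha_0=0$. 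Thus $L\equiv 0$, contradicting its choice, so the degenerate case produces only finitely many $N$ and the lemma follows.
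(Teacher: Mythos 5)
Your proposal is correct and follows essentially the same route as the paper's proof: reduce to a single nontrivial linear relation, eliminate $x_1^{(i_0)}$ and $x_1^{(i_a)}$ via \eqref{eq:corner_y} so that the coefficient of $N$ is $\alpha_0+\alpha_a$, and in the degenerate case $\alpha_0+\alpha_a=0$ apply factor-wise Galois automorphisms (using $\Gamma=\prod_i\Gamma^{(i)}$ from linear disjointness) to force all remaining coefficients to vanish, ending in the contradiction $\alpha_a a=0$. The only cosmetic difference is that you substitute directly where the paper manipulates the two-equation system \eqref{eq:LinSys}; the logic is identical.
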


\begin{proof}

In order to prove Lemma \ref{lem:finite_sol} it is sufficient to prove that only bounded solutions $\mathbf x$ satisfying  
\eqref{eq:corner_y} lie on a proper linear subspace $T\subset K^{|X|}$, i.e. satisfy a homogenous equation of the form
\begin{equation}\label{eq:ProperSubSpace}
 t_1^{(i_0)} x_1^{(i_0)}+ t_1^{(i_a)} x_1^{(i_a)}+\sum_{i\in I_0\cup I_a} \sum_{j=2}^{d_i} t_j^{(i)} x_j^{(i)}=0,
\end{equation}
where $\left|x_j^{(i)}\right|=\mathcal O(1)$ unless $j=1$ and not all $t_j^{(i)}$ vanish. In case that $a=0$ we immediately obtain 
$\left|x_1^{(i_0)}\right|=\mathcal O(1)$,
hence $N$ is bounded and therefore also $\overline{|\mathbf x|}$. 

In case that $a\neq 0$ we consider the linear system
\begin{equation}\label{eq:LinSys}
\begin{split}
 0=& t_1^{(i_0)} x_1^{(i_0)}+ t_1^{(i_a)} x_1^{(i_a)}+\stackrel{=\mathcal O(1)}{\overbrace{\sum_{i\in I_0\cup I_a} \sum_{j=2}^{d_i} t_j^{(i)} x_j^{(i)}}}\\
 a=& x_1^{(i_0)}-x_1^{(i_a)} +\stackrel{=\mathcal O(1)}{\overbrace{\sum_{j=2}^{d_i} x_j^{(i_0)}-x_j^{(i_a)}}} .
\end{split} 
\end{equation}
Let us assume for the moment that $t_1^{(i_a)}\neq -t_1^{(i_0)}$. If we add $t_1^{(i_a)}$ times the second equation to the first equation in the linear
system~\eqref{eq:LinSys}, then we see that $x_1^{(i_0)}$ can be written as the linear combination of the $x_j^{(i)}$ with $j>1$, i.e. $x_1^{(i_0)}$ is bounded 
and $N$ and $\overline{|\mathbf x|}$ are bounded too.

Therefore we may assume that $t_1^{(i_a)}= -t_1^{(i_0)}$ and the linear system \eqref{eq:LinSys} turns into
\begin{equation}\label{eq:Reduced}\sum_{i\in I_0\cup I_a}\sum_{j=2}^{d_i} u_j^{(i)} x_j^{(i)}=b\end{equation}
where $u_j^{(i_0)}=t_j^{(i_0)}-t_1^{(i_0)}$, $u_j^{(i_a)}=t_j^{(i_a)}-t_1^{(i_a)}$, $u_j^{(i)}=t_j^{(i)}$ if $i\neq i_0,i_a$ and $b=at^{(i_a)}$.
Let us recall that the Galois group of $K$ is of the form 
$\Gamma=\prod_{i\in I_0 \cup I_a} \Gamma^{(i)}$. Let us take some $\sigma\in \Gamma$ such that 
$\sigma\left(\beta_m^{(\ell)}\right)=\beta_1^{(\ell)}$ and $\sigma|_{K^{(i)}}=\mathrm{id}$ for all $i\neq \ell$. If we apply $\sigma$ to equation 
\eqref{eq:Reduced} we obtain
\begin{multline*}
 \sigma\left(u_m^{(\ell)}\right)\sigma\left(x_m^{(\ell)}\right)+\sum_{i\in I_0\cup I_a} \sum_{2\leq j \leq d_i \atop (i,j)\neq(\ell,m)}
\sigma\left(u_j^{(i)}\right) \sigma\left(x_j^{(i)}\right)=\\
\sigma\left(u_m^{(\ell)}\right) x_1^{(\ell)}+\stackrel{\mathcal O(1)}{\overbrace{\sum_{i \in I_0\cup I_a}\sum_{j=2}^{d_i} \tilde u_j^{(i)} x_j^{(i)}}}= 
\sigma(b),
\end{multline*}
where $\tilde u_j^{(i)}=\sigma\left(u_k^{(i)}\right)$ and $k$ satisfies $\sigma\left(\beta_k^{(i)}\right)=\beta_j^{(i)}$.
Therefore either $\sigma\left(u_m^{(\ell)}\right)=0$, i.e. $u_m^{(\ell)}=0$, or $\left|x_1^{(\ell)}\right|=\mathcal O(1)$. Therefore varying $m$ and $\ell$ 
over all possibilities we obtain either $u_i^{(j)}=0$ for all admissible pairs $(i,j)$ or the 
solution $\mathbf x$ is bounded. Therefore we may assume that all $t_j^{(i_0)}=t_1^{(i_0)}$, $t_j^{(i_a)}=t_1^{(i_a)}=-t_1^{(i_0)}$ and $t_j^{(i)}=0$ if $i\neq i_0,i_a$.
Furthermore, we may assume that $t_1^{(i_0)}\neq 0$ since otherwise all $t_j^{(i)}$ would vanish which contradicts the fact that $T$ is a proper subspace.
Therefore the linear system \eqref{eq:LinSys} yields
$$ 0=x_1^{(i_0)}-x_1^{(i_a)} +\sum_{j=2}^{d_i} x_j^{(i_0)}-x_j^{(i_a)}=a\neq 0,$$
a contradiction.
\end{proof}

\begin{remark}
We want to emphasize that the only point where we used the assumption that the $K^{(i)}$ are linearly disjoint over $\Q$ is in the proof of Lemma 
\ref{lem:finite_sol}. If we do not assume that the $K^{(i)}$ are linearly disjoint, then we cannot assure that certain coefficients $t_j^{(i)}$ in 
\eqref{eq:ProperSubSpace} vanish and the proof breaks down.
\end{remark}

\begin{remark}
In view of Lemma \ref{lem:finite_sol} the constant $C_{\varepsilon,\boldsymbol \beta}$ in Theorem \ref{th:corners} depends on the coefficients $t_j^{(i)}$ 
from the equation defining 
the subspace $T$. Since the ineffectiveness of the absolute Subspace Theorem the constant $C_{\varepsilon,\boldsymbol \beta}$ remains ineffective.
\end{remark}

\section*{Acknowledgment}

The second author was supported by the Austrian Science Fund (FWF) under the project P~24801-N26. Moreover, we want to thank the anonymous referee for his 
helpful comments, which considerably improved Section \ref{Sec:Subspace}.


\end{document}